\numberwithin{equation}{section}
\newtheorem{Theorem}{Theorem}[section]
\newtheorem*{Theorem*}{Theorem}
\newtheorem{Proposition}[Theorem]{Proposition}
 { \theoremstyle{definition}
\newtheorem{Definition}[Theorem]{Definition}
\newtheorem{Note}[Theorem]{Note}
\newtheorem{Example}[Theorem]{Example}
\newtheorem{Remark}[Theorem]{Remark} }
\begin{document}
\allowdisplaybreaks

\renewcommand{\thefootnote}{}

\newcommand{\arXivNumber}{2108.08082}

\renewcommand{\PaperNumber}{028}

\FirstPageHeading

\ShortArticleName{Pullback Coherent States, Squeezed States and Quantization}

\ArticleName{Pullback Coherent States, Squeezed States\\ and Quantization\footnote{This paper is a~contribution to the Special Issue on Mathematics of Integrable Systems: Classical and Quantum in honor of Leon Takhtajan.

~~\,The full collection is available at \href{https://www.emis.de/journals/SIGMA/Takhtajan.html}{https://www.emis.de/journals/SIGMA/Takhtajan.html}}}

\Author{Rukmini DEY and Kohinoor GHOSH}

\AuthorNameForHeading{R.~Dey and K.~Ghosh}

\Address{International Center for Theoretical Sciences, Sivakote, Bangalore, 560089, India}
\Email{\href{mailto:rukmini@icts.res.in}{rukmini@icts.res.in}, \href{mailto:kohinoor.ghosh@icts.res.in}{kohinoor.ghosh@icts.res.in}}
\URLaddress{\url{https://www.icts.res.in/people/rukmini-dey},\newline
\hspace*{10.5mm}\url{https://www.icts.res.in/people/kohinoor-ghosh}}

\ArticleDates{Received December 07, 2021, in final form March 30, 2022; Published online April 09, 2022}

\Abstract{In this semi-expository paper, we define certain Rawnsley-type coherent and squeezed states on an integral K\"ahler manifold (after possibly removing a set of measure zero) and show that they satisfy some properties which are akin to maximal likelihood pro\-perty, reproducing kernel property, generalised resolution of identity property and overcompleteness. This is a generalization of a result by Spera. Next we define the Rawnsley-type pullback coherent and squeezed states on a smooth compact manifold (after possibly remo\-ving a set of measure zero) and show that they satisfy similar properties. Finally we show a Berezin-type quantization involving certain operators acting on a Hilbert space on a compact smooth totally real embedded submanifold of $U$ of real dimension $n$, where $U$ is an open set in ${\mathbb C}{\rm P}^n$. Any other submanifold for which the criterion of the identity theorem holds exhibit this type of Berezin quantization. Also this type of quantization holds for totally real submanifolds of real dimension $n$ of a general homogeneous K\"ahler manifold of real dimension $2n$ for which Berezin quantization exists. In the appendix we review the Rawnsley and generalized Perelomov coherent states on ${\mathbb C}{\rm P}^n$ (which is a coadjoint orbit) and the fact that these two types of coherent states coincide. }

\Keywords{coherent states; squeezed states; geometric quantization; Berezin quantization}

\Classification{53D50; 53D55}

\renewcommand{\thefootnote}{\arabic{footnote}}
\setcounter{footnote}{0}

\section{Introduction}

Let $M$ be a compact K\"ahler manifold with $\omega$ an integral K\"ahler form and let $L$ be a prequantum line bundle (obtained from geometric quantization), i.e., its curvature is proportional to the K\"ahler form. Then one can take as the Hilbert space of
quantization the space of holomorphic sections of $ L^{\otimes \mu}$ for $\mu \in {\mathbb Z}$ large
enough. See \cite{W} for an explanation. Coherent states arise very naturally in geometric quantization. The Hilbert space obtained in geometric quantization could provide the starting point of Berezin quantization. For details on Berezin quantization see Berezin \cite{Be}, Perelomov \cite{Pe}.

The mathematical physics literature on coherent states is vast, see for example \cite{KS}. To name a few other works, we mention \cite{BS, Ki, Na, Od, Pe, Ra, Sp, Ya}.
For a survey on squeezed states, see for example \cite{Sch}.

In this article we will be focussing greatly on Rawnsley coherent states.
In \cite{Ra}, Rawnsley has defined coherent states on a compact K\"ahler manifold with an integral
K\"ahler form which arise naturally out of geometric quantization.
This goes as follows.
Let~${\mathcal H}$ be space of holomorphic square integrable sections of the quantum bundle $L$, the mea\-sure being~${\rm e}^{-F} {\rm d}\mu$, where $F$ is the K\"ahler potential and ${\rm d}\mu $ proportional to the volume form.
Rawnsley defines the coherent states by considering the section-evaluation functional and writing it as an inner product with a coherent state vector (using Riesz representation theorem).

Spera \cite{Sp} had shown that under certain conditions the Rawnsley coherent states satisfy the maximal likelihood property, reproducing kernel property, the generalised resolution of identity and overcompleteness.

We define certain Rawnsley-type coherent states and squeezed states on an integral compact K\"ahler manifold.
We generalize the result by Spera to show that this type of coherent states sati\-sfy properties akin to maximal likelihood property, reproducing kernel property, the generalised resolution of identity and overcompleteness.

The high point of this paper is the definition of certain Rawnsley-type pullback coherent states on an arbitrary compact smooth manifold~$M$. $M$~need not have a symplectic structure and hence no geometric quantization. It may not have a group action as well. But we can still talk of Rawnsley-type coherent states on it, using pullback by a~smooth embedding in ${\mathbb C}{\rm P}^n$. The Rawnsley-type coherent states (which are related to the pull backs of the coherent states on ${\mathbb C}{\rm P}^n$) are given. We show that these satisfy properties akin to the maximal likelihood property, reproducing kernel property, the generalised resolution of identity and overcompleteness. Similarly we define the pullback squeezed states and show that they exhibit similar properties. These definitions depend on the embedding. The pullback coherent states are useful in defining Berezin quantization of arbitrary smooth submanifolds of~${\mathbb C}{\rm P}^n$ under certain conditions. This is work in progress.

Finally we show a Berezin-type quantization involving certain operators acting on a Hilbert space on compact smooth totally real submanifolds of ${\mathcal U}$ of real dimension $n$, where ${\mathcal U}$ is an open set of ${\mathbb C}{\rm P}^n$ which is biholomorphic to ${\mathbb C}^n $. The conditions ensure that the identity theorem holds, namely if two holomorphic functions defined on ${\mathcal U}$ agree on the submanifold, they must agree on~${\mathcal U}$. Any other submanifold for which the criterion of the identity theorem holds (see~\cite[Chapter~9, Lemma~2]{Bo}) will also exhibit this type of Berezin quantization.

There is nothing special about ${\mathbb C}{\rm P}^n$ as this type of quantization holds for totally real submanifolds of real dimension $n$ of a general homogenous K\"ahler manifold of real dimension $2n$ (or any other submanifolds for which the identity theorem criterion holds).

Perelomov coherent states are defined by Perelomov as generalized coherent states in \cite[p.~40]{Pe}.
The generalized coherent states of geometric quantization of ${\mathbb C}{\rm P}^1$ have been described in Radcliffe \cite{Rad} and Nair \cite[p.~501]{Na} in the context of spin quantization. Recall that ${\mathbb C}{\rm P}^1$ is a~coadjoint orbit of the form $\frac{{\rm SU}(2)}{{\rm U}(1)}$ and the generalized coherent states can be found in the sense of Perelomov \cite{Pe} and Rawnsley \cite{Ra}.
The connection between Borel--Weil theory and Perelomov coherent states is reviewed in Spera~\cite{Sp2}.

In the appendix we review the generalised Perelomov coherent states (defined in \cite[p.~40]{Pe}) for ${\mathbb C}{\rm P}^n$ which is a coadjoint orbit of the form $\frac{{\rm SU}(n+1)}{{\rm S}({\rm U}(n) \times {\rm U}(1))}$. We use the general result in~\cite{K}. We then explicitly describe the Rawnsley coherent states and review the fact that they are the same as the Perelomov coherent states for ${\mathbb C}{\rm P}^n$ following an argument in~\cite{Ra}.

\section[Rawnsley-type coherent and squeezed states on an integral compact K\"ahler manifold]
{Rawnsley-type coherent and squeezed states \\on an integral compact K\"ahler manifold}

\subsection{Rawnsley-type coherent states on integral compact K\"ahler manifold}

In Spera \cite{Sp} (amongst other papers and books), the properties of Rawnsley coherent states, namely, maximum likelihood property, reproducing kernel property, generalized resolution of identity,
overcompleteness etc. have been spelt out under certain conditions. We define certain Rawnsley-type coherent states on an integral compact K\"ahler manifold. We do this by modifying some of the ideas in the articles by Rawnsley \cite{Ra}, Kirwin \cite{Ki}, Spera \cite{Sp}, Perelomov \cite{Pe}.
We show that these Rawnsley-type coherent states satisfy properties akin to the properties mentioned above, thus generalizing Spera's result, i.e., we do not impose any conditions except that the K\"ahler form is integral (and thus there is a large enough $k$ such that $L^k$ is very ample where $L$ is a prequantum bundle). They are called Rawnsley-type because we show that they arise from a modification of the section-evaluation functional.

Let $(L,\nabla, 2\pi {\rm i} \Omega)\longrightarrow (M,\Omega)$ be a line bundle with $\Omega$ an integral K\"ahler form on $M$. Let $h$ be a Hermitian metric on $L^k$, where $k$ is such that $L^k$ is very ample. Let $\Gamma$ be the space of global holomorphic sections of~$L^k$.

We choose the inner product on $\Gamma$ w.r.t.\ $h$, namely $\langle\phi_1, \phi_2\rangle = \int_{M} \overline{\phi_1} \phi_2 h {\rm d}V $, to be antilinear in the first term and linear in the second (unlike Rawnsley's convention). We will retain this convention henceforth. Let ${\mathcal H}$ be the space of square integrable sections in $\Gamma$.
Let $\{\psi_i\}_{i=1}^m$ be an orthonormal basis for ${\mathcal H}$ which is basepoint free.
Let $\phi \in \mathcal{H}$, a square integrable holomorphic section of $L^k$ then $\phi$ can be expressed as a linear combination of the orthonormal basis ele\-ments~$\{\psi_i\}_{i=1}^m$. Then $\phi = \sum_{i=1}^m\langle\psi_i, \phi\rangle\psi_i $.
Let $\mu \in M $ and $\chi(\mu)^2 = \sum_{i=1}^m |\psi_i(\mu)|^2 $. Since $\{\psi_i\}_{i=1}^m$ is a~basepoint free basis, they do not simultaneously vanish. Thus $\chi(\mu) \neq 0$.
Let $\psi_i = f_i s_0$, where~$s_0$ is a fixed section such that its zero set is $M_0 \subset M$ and $f_i$ are meromorphic functions which are holomorphic on $M \setminus M_0$.

\begin{Definition}
For $\mu \in M$ we define
\[
\phi_{\mu}= \frac{1}{p(\mu)} \sum_{i=1}^m \overline{f_i(\mu)} \psi_i,
\]
where $\psi_i = f_i s_0$, $f_i$ is a meromorphic function on $M$, which is holomorphic on $M \setminus M_0 $ and $p(\mu)^2 = \sum_{i=1}^m |f_i(\mu)|^2$.
\end{Definition}

For $\mu \in M \setminus M_0$ one sees that
\[
\phi_{\mu}= \frac{s_0(\mu)}{|s_0(\mu)|\chi (\mu)} \sum_{i=1}^m \overline{\psi_i (\mu)} \psi_i,
\]
where $\chi(\mu)^2 = \sum_{i=1}^m |\psi_i(\mu)|^2 \neq 0$.
 Note that $\phi_{\mu}$ is a smooth section of $L^k$.
One can check that $\| \phi_{\mu}\|^2 =1$ and $\phi_{\mu} (\mu) =\tau(\mu)$, where $\tau(\mu) = s_0 p(\mu) = \frac{s_0(\mu)}{|s_0(\mu)|}\chi(\mu)$. It is non zero for $ \mu \in M \setminus M_0$ since $\chi$ is non zero and $s_0(\mu) \neq 0$.

We have a generalization of Spera's result \cite[Theorem~2.1]{Sp} as follows:

\begin{Theorem}\label{Kah}
Let $M$ be a integral compact K\"ahler manifold and let $\mathcal{H}$, $s_0$, $\phi_{\mu}$ and $\tau$ be as defined above. Then, for all $\mu \in M$, we have
\begin{enumerate}\itemsep=0pt

\item[$(a)$] $\phi_{\mu}$ are Rawnsley-type coherent states of $M$.
In fact, for $\mu \in M$ the general formula is $\langle\phi_{\mu}, \phi\rangle = \frac{g(\mu)}{ p(\mu)} $, where $\phi = g s_0$.
 Also, $\langle\phi_{\mu}, \phi\rangle = \frac{\phi(\mu)}{ \tau(\mu)} $ for $\mu \in M \setminus M_0$.

\item[$(b)$] $\phi_{\mu} $ satisfy
\begin{enumerate}\itemsep=0pt

\item[$(i)$] the maximal likelihood property, namely, $|\phi_{\mu}(\mu)|^2 \geq | \phi (\mu)|^2 $ for all $\mu$ and all $\phi \in {\mathcal H} $ such that $\langle\phi, \phi\rangle =1$,

\item[$(ii)$] the property that $ | \phi_{\mu} (\mu)|^2 \geq |\phi_{\mu^{\prime}} (\mu)|^2$,

\item[$(iii)$] the modified reproducing kernel property, namely,
\[
\langle\phi_{\mu}, \phi\rangle = \frac{1}{\chi(\mu)^2} \overline{\phi_{\mu}(\mu)} \phi(\mu)\qquad
\text{for}\quad \mu \in M,
\]

\item[$(iv)$] the modified generalized resolution of identity, namely,
\[
\langle\psi_1, \psi_2\rangle = \int_{M} \langle\psi_1, \phi_{\mu}\rangle \langle\phi_{\mu}, \psi_2\rangle \chi(\mu)^2 h(\mu)\,{\rm d}V(\mu),
\]

\item[$(v)$] overcompleteness, namely, $\langle\phi_{\mu}, \psi\rangle =0$ $ \forall \mu$ imples $\psi=0$.
\end{enumerate}
\end{enumerate}
\end{Theorem}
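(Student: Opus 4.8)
The plan is to establish $(a)$ by a direct computation and then to read off every statement in $(b)$ as an essentially formal consequence of $(a)$, of the normalization $\|\phi_{\mu}\|=1$, and of the value $\phi_{\mu}(\mu)=\tau(\mu)$, both already recorded before the theorem.

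For $(a)$, I would expand $\langle\phi_{\mu},\phi\rangle$ straight from the definition of $\phi_{\mu}$: using antilinearity of the inner product in the first slot and orthonormality of $\{\psi_i\}$ gives $\langle\phi_{\mu},\phi\rangle=\frac{1}{p(\mu)}\sum_i f_i(\mu)\langle\psi_i,\phi\rangle$. Expanding $\phi=\sum_i\langle\psi_i,\phi\rangle\psi_i$ with $\psi_i=f_is_0$ yields $\phi=\bigl(\sum_i\langle\psi_i,\phi\rangle f_i\bigr)s_0$, so $g=\sum_i\langle\psi_i,\phi\rangle f_i$ (an identity of meromorphic functions, checked on $M\setminus M_0$ and then valid everywhere), whence $\langle\phi_{\mu},\phi\rangle=g(\mu)/p(\mu)$. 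On $M\setminus M_0$ I rewrite this ratio of scalars as the ratio $\phi(\mu)/\tau(\mu)$ of parallel vectors in the fibre of $L^k$ at $\mu$, using $\phi(\mu)=g(\mu)s_0(\mu)$ and $\tau(\mu)=p(\mu)s_0(\mu)$; this gives the second formula in $(a)$.

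Granting $(a)$, the rest is short. For $(b)(i)$: on $M\setminus M_0$ the fibre identity $\phi(\mu)=\langle\phi_{\mu},\phi\rangle\,\tau(\mu)$ and the fact that $\tau(\mu),\phi(\mu)$ are parallel give $|\phi(\mu)|^2=|\langle\phi_{\mu},\phi\rangle|^2|\tau(\mu)|^2$; if $\|\phi\|=1$ then Cauchy--Schwarz gives $|\langle\phi_{\mu},\phi\rangle|\le\|\phi_{\mu}\|\,\|\phi\|=1$, and, since $\tau(\mu)=\phi_{\mu}(\mu)$, this reads $|\phi(\mu)|^2\le|\phi_{\mu}(\mu)|^2$. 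Both sides are continuous on the compact $M$, and $M_0$ is the zero set of the nonzero holomorphic section $s_0$, hence nowhere dense and of measure zero, so the inequality extends to all of $M$. For $(b)(ii)$ I apply $(i)$ to $\phi=\phi_{\mu'}$, a unit vector in $\mathcal H$. For $(b)(iii)$ I start from $\langle\phi_{\mu},\phi\rangle=\phi(\mu)/\tau(\mu)$ and, using $\phi_{\mu}(\mu)=\tau(\mu)$ and $|\tau(\mu)|^2=\chi(\mu)^2$, rewrite the right-hand side as $\overline{\tau(\mu)}\,\phi(\mu)/|\tau(\mu)|^2=\frac{1}{\chi(\mu)^2}\overline{\phi_{\mu}(\mu)}\,\phi(\mu)$, extending to $M_0$ by continuity.

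For $(b)(iv)$ I substitute the expressions for $\langle\psi_1,\phi_{\mu}\rangle$ and $\langle\phi_{\mu},\psi_2\rangle$ from $(a)$ (or from $(iii)$) into the right-hand integrand; the factors of $p(\mu)$ — equivalently of $\chi(\mu)$ and $\phi_{\mu}(\mu)=\tau(\mu)$ — then cancel against the weight $\chi(\mu)^2$, so that the integrand collapses to $\overline{\psi_1}\psi_2\,h$, which is exactly the integrand defining $\langle\psi_1,\psi_2\rangle$; integrating over $M$ (equivalently over $M\setminus M_0$) gives the claim. Finally $(b)(v)$ follows at once from $(iv)$ with $\psi_1=\psi_2=\psi$, which forces $\|\psi\|^2=0$; alternatively, from $(iii)$, if $\langle\phi_{\mu},\psi\rangle=0$ for all $\mu$ then $\psi(\mu)=0$ on the dense set $M\setminus M_0$, and $\psi$ holomorphic then gives $\psi\equiv0$. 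The one place that needs care — the ``main obstacle'' — is the vanishing locus $M_0$: the states $\phi_{\mu}$ and the formulas in $(a)$ and $(iii)$ are written through the meromorphic functions $f_i=\psi_i/s_0$ and through division by $s_0(\mu)$, so one must check these constructions really make sense on $M\setminus M_0$ and that every assertion ``for all $\mu\in M$'' is recovered on $M_0$ by continuity and density, while keeping the different meanings of $\phi(\mu)$ — a vector in the fibre $L^k_{\mu}$, its component in the frame $s_0$, and its $h$-norm — consistently separated throughout.
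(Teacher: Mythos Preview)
Your proposal is correct and follows essentially the same route as the paper: part $(a)$ by a direct orthonormality computation, then $(i)$ via Cauchy--Schwarz together with $|\phi_{\mu}(\mu)|^2=\chi(\mu)^2$, $(ii)$ as the special case $\phi=\phi_{\mu'}$, $(iii)$ from $(a)$ and $\phi_{\mu}(\mu)=\tau(\mu)$, $(iv)$ by inserting $(a)$ into the definition of $\langle\psi_1,\psi_2\rangle$, and $(v)$ directly from $(a)$. Your treatment is in fact more careful than the paper's about the measure-zero set $M_0$ and about distinguishing fibre vectors from their $s_0$-components, but the underlying argument is the same.
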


\begin{proof}
$(a)$ follows from a simple calculation.
$(b)$ The proof follows by modifications of~\cite{Sp} as below:

$(i)$ $|\phi_{\mu}|^2 = \chi^2 $ and $\|\phi_{\mu}\|^2 =1$ and by $(a)$ we have
\[
|\phi(\mu)|^2 = | \langle\phi_{\mu}, \phi\rangle|^2 \chi(\mu)^2 \leq \|\phi_{\mu}\|^2 \|\phi\|^2 \chi(\mu)^2 = \chi(\mu)^2 = |\phi_{\mu}(\mu)|^2.
\]

$(ii)$ This follows along similar lines as $(i)$.

$(iii)$ This follows from $(a)$ and the fact that $\phi_{\mu}(\mu) = s_0 p(\mu)$ $(=\tau(\mu))$.

$(iv)$ Using $(a)$,
\begin{align*}
\langle\psi_1, \psi_2\rangle &= \int_M \overline{\psi}_1(\mu) \psi_2(\mu) h (\mu)\,{\rm d}V (\mu)
 = \int_{M} \overline{\langle\phi_ {\mu}, \psi_1\rangle} \langle\phi_{\mu}, \psi_2\rangle \chi(\mu)^2 h(\mu)\,{\rm d}V(\mu)
\\
&= \int_{M } \langle\psi_1, \phi_{\mu}\rangle \langle\phi_{\mu}, \psi_2\rangle \chi (\mu)^2 h(\mu)\,{\rm d}V(\mu).
\end{align*}

$(v)$ This follows from $(a)$.
\end{proof}

\subsection{Squeezed states on an integral compact K\"ahler manifold }

We define squeezed states in a similar fashion as the coherent states in the previous section.
 Let~$M$ be an integral compact K\"ahler manifold of dimension $d$.
Let $q \in U \subset M$, where $U$ is an open neighbourhood of $q$ such that $\phi_U $ is a biholomorphism to an open ball $V \subset {\mathbb C}^d$ to $U$.
Let $q=\nu= \phi_U( \nu_1 + {\rm i}\nu_2) $, where $\nu_1 + {\rm i}\nu_2 \in V$. Let $\zeta \in {\mathbb R}$ be such that $ \nu_1 + {\rm i}\zeta \nu_2 $ belongs to $V$. Then $\nu_{\zeta} = \phi_U( \nu_1 + {\rm i}\zeta \nu_2) \in U$. Let $q_{\zeta} = \nu_{\zeta}$.

\begin{Proposition}\label{celldecomp}
Let $M$ be a $2d$-dimensional compact smooth manifold. Then there exists a~subset
$\tilde{M}$ of dimension at most $(2d-1)$such that $M \setminus \tilde{M}$ admits an open cover by a single open set~$U$ of the above kind.
\end{Proposition}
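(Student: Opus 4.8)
The plan is Morse-theoretic. First I would reduce to the case where $M$ is connected (the ambient manifolds in this paper are connected; for disconnected $M$ the statement must be read componentwise) and write $n=2d$. The idea is to build a Morse function on $M$ having a single maximum and then take $U$ to be the unstable manifold of that maximum for a gradient flow: its complement is then automatically a finite union of cells of dimension $\le n-1$.

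\emph{Step 1: a Morse function with one maximum.} I would invoke the standard fact that a closed connected smooth manifold $M^n$ admits a handle decomposition with exactly one handle of index $n$ --- equivalently, a Morse function $f\colon M\to\mathbb R$ with a unique local maximum $p$. (This follows from any handle decomposition by handle trading and reordering, using connectedness of $M$ to merge all top-index handles into one, dually to merging all $0$-handles.) Since the index-$n$ critical points of a Morse function are exactly its local maxima, $p$ is the only critical point of index $n$, and every other critical point $q$ satisfies $\mathrm{ind}(q)\le n-1$.

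\emph{Step 2: the open set $U$.} Fix a Riemannian metric, let $\varphi_t$ be the negative gradient flow (its zeros are the critical points, all hyperbolic), and set $U:=W^u(p)=\{x\in M:\varphi_t(x)\to p\text{ as }t\to-\infty\}$. With $c=\max f$ and $\epsilon>0$ chosen so that $[c-\epsilon,c]$ contains no critical value except $c$, the component $D$ of $f^{-1}([c-\epsilon,c])$ through $p$ is a closed $n$-disk; the backward flow of a point of $\mathrm{int}\,D$ stays in $\mathrm{int}\,D$ and converges to $p$, whence $U=\bigcup_{s\ge 0}\varphi_s(\mathrm{int}\,D)$ is open in $M$. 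By the unstable manifold theorem for the hyperbolic zero $p$ (of index $n$), $U$ is diffeomorphic to $\mathbb R^n\cong\mathbb C^d$, so composing with a diffeomorphism onto an open ball $V\subset\mathbb C^d$ realizes $U$ as an open set ``of the above kind''.

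\emph{Step 3: the complement, and the obstacle.} Every point of $M$ has $\alpha$-limit (under $\varphi_t$) a single critical point, so $M=\bigsqcup_q W^u(q)$, and therefore $\tilde M:=M\setminus U=\bigcup_{q\ne p}W^u(q)$ is a finite union of injectively immersed submanifolds of dimensions $\mathrm{ind}(q)\le n-1=2d-1$; in particular $\tilde M$ has dimension at most $2d-1$ (hence measure zero) and is closed, since $U$ is open --- this is the asserted decomposition $M=U\sqcup\tilde M$. The one step that is not a formality is Step 1, the handle-trading argument producing a single maximum; Steps 2 and 3 are then routine, and Morse--Smale transversality of $\varphi_t$, although available after a generic perturbation of the metric, is not actually needed. (An alternative route is to take a triangulation or CW structure of $M$ with a single top cell and let $\tilde M$ be the $(2d-1)$-skeleton, at the cost of extra work identifying the top open cell with a smooth ball chart; the gradient-flow picture supplies that chart for free.)
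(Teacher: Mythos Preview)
Your proof is correct and takes a genuinely different route from the paper's. The paper simply cites Doyle--Hocking \cite{DH} for the fact that a closed $2d$-manifold decomposes as a single $2d$-cell glued to a $(2d-1)$-dimensional skeleton, takes $\tilde M$ to be that skeleton, and observes that the open top cell is homeomorphic to $\mathbb{C}^d$. Your Morse-theoretic argument (one maximum, $U$ its unstable manifold, $\tilde M$ the union of the remaining unstable cells) reproves essentially the same decomposition from scratch, and --- as you note at the end --- your ``alternative route'' via a CW structure with one top cell is exactly what the paper does. What your approach buys is a \emph{diffeomorphism} $U\cong\mathbb{R}^{2d}$ from the unstable manifold theorem, whereas the paper's appeal to \cite{DH} yields only a homeomorphism; since the coordinate map $\phi_U$ is used to transport $\mu\mapsto\mu_\zeta$ smoothly, the diffeomorphism is actually what is wanted, so your argument is slightly cleaner on this point. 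The paper's version is shorter by outsourcing the work to a reference; yours is more self-contained and makes the smooth structure of $U$ explicit. The reduction to connected $M$ that you flag is indeed implicit in the paper's use of the proposition.
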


\begin{proof}
A $2d$-dimensional manifold admits a cell-decomposition as a single $2d$-dimensional cell glued
to a skeleton of dimension at most $(2d-1)$ \cite{DH}. Let $\tilde{M}$ be this skeleton. Then $M \setminus \tilde{M}$ is homeomorphic to
$ {\mathbb C}^d$. Thus one open set $U$ is enough to cover $M \setminus \tilde{M}$.
\end{proof}

Let $\{\psi_i\}_{i=1}^m$ be the orthonormal basis for the Hilbert space of geometric quantization as described in the previous section, with inner product $\langle\cdot, \cdot\rangle$.
Let $\mu = \phi_U (\mu_1 + {\rm i}\mu_2)$ and $\mu_{\zeta} = \phi_U(\mu_1 + {\rm i}\zeta \mu_2)$.
We define
$\chi(\mu_{\zeta}) ^2 = \sum_{i=1}^m | \psi_i(\mu_{\zeta})|^2 $ which is non-zero since $\{\psi_i\}$ is a base-point free basis.

Let us define the squeezed states as follows.
Let $s_0$ be a fixed holomorphic section of the prequantum bundle whose vanishing set is $M_0 \subset M$.
Then for $\mu_{\zeta} \in M \setminus \big(\tilde{M} \cup M_0\big)$ we define the squeezed states as follows.
\begin{Definition}
\[
\phi^{\zeta}_{\mu} (\nu) = \frac{s_0(\mu_{\zeta}) }{| s_0(\mu_{\zeta})|\chi(\mu_{\zeta})} \sum_{i=1}^m \overline{\psi_i(\mu_{\zeta})} \psi_i(\nu).
\]
\end{Definition}

Let $\tau(\mu_{\zeta} ) = \frac{s_0(\mu_{\zeta})\chi(\mu_{\zeta}) }{| s_0(\mu_{\zeta})|}$.
It can be shown that they satisfy properties same as mentioned in the theorem below.

\begin{Theorem}\label{Sq2}

The $\phi^{\zeta}_{\mu}$ satisfy the following properties:
\begin{enumerate}\itemsep=0pt

\item[$1.$] $\big\langle\phi^{\zeta}_{\mu}, \phi^{\zeta}_{\mu}\big\rangle =1$.

\item[$2.$]
$\big\langle\phi^{\zeta}_{\mu}, \phi\big\rangle = \frac{1}{\tau(\mu_{\zeta})} \phi (\mu_{\zeta})$.

\item[$3.$] The maximal likelihood property, namely, $\big|\phi^{\zeta}_{\mu}(\mu_{\zeta})\big|^2 \geq | \phi (\mu_{\zeta})|^2 $ for all $\mu_{\zeta} \in U$ and all $\phi \in {\mathcal H} $ such that $\langle\phi, \phi\rangle =1$.

\item[$4.$] The property that $ \big| \phi^{\zeta}_{\mu} (\mu_{\zeta})\big|^2 \geq \big|\phi^{\zeta} _{\mu^{\prime}} (\mu_{\zeta})\big|^2$.

\item[$5.$] The modified reproducing kernel property, namely,
$\big\langle\phi^{\zeta}_{\mu}, \phi\big\rangle = \frac{1}{\chi(\mu_{\zeta})^2} \overline{\phi^{\zeta}_{\mu}(\mu_{\zeta})} \phi(\mu_{\zeta}) $ for \mbox{$\mu_{\zeta} \!\in\! U $}.

\item[$6.$] Modified resolution of identity:
$\langle\phi_1, \phi_2\rangle = \int_M \chi (\mu_{\zeta})^2 \big\langle\phi_1, \phi_{\mu}^{\zeta}\big\rangle
\big\langle\phi_{\mu}^{\zeta}, \phi_2\big\rangle h(\mu_{\zeta})\, {\rm d}V(\mu_{\zeta})$.

\item[$7.$] Overcompleteness: $\big\langle\phi_{\mu}^{\zeta}, \phi\big\rangle=0$ for all $\mu$ iff $\phi =0$.
\end{enumerate}
\end{Theorem}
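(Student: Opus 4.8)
The plan is to observe that Theorem~\ref{Sq2} contains essentially no new content beyond Theorem~\ref{Kah}: the squeezed state $\phi^\zeta_\mu$ is, as a section of $L^k$, nothing but the Rawnsley-type coherent state attached to the point $\mu_\zeta$. Indeed, comparing the Definition of $\phi^\zeta_\mu$ with the displayed formula for $\phi_\mu$ on $M\setminus M_0$ given just before Theorem~\ref{Kah}, one reads off that $\phi^\zeta_\mu=\phi_{\mu_\zeta}$ and that $\tau(\mu_\zeta)$ is the previously defined $\tau$ evaluated at $\mu_\zeta$. Since $\mu_\zeta\in M\setminus\big(\tilde M\cup M_0\big)$, in particular $\mu_\zeta\notin M_0$, so $\tau(\mu_\zeta)\neq 0$ and $\phi_{\mu_\zeta}$ is a well-defined, non-vanishing smooth section near $\mu_\zeta$. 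Every one of the seven assertions is then the corresponding line of Theorem~\ref{Kah}, read at the point $\mu_\zeta$ (and, in item~$4$, also at $\mu'_\zeta$) rather than at $\mu$.

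Carrying this out: item~$1$ is the normalization $\big\|\phi_{\mu_\zeta}\big\|^2=1$ recorded before Theorem~\ref{Kah}; item~$2$ is part~$(a)$ with $\mu$ replaced by $\mu_\zeta$; items~$3$ and~$4$ are parts~$(b)(i)$ and $(b)(ii)$ at $\mu_\zeta$; item~$5$ is part~$(b)(iii)$ at $\mu_\zeta$ together with $\phi_{\mu_\zeta}(\mu_\zeta)=\tau(\mu_\zeta)$; and item~$7$ is part~$(b)(v)$: if $\big\langle\phi^\zeta_\mu,\phi\big\rangle=0$ for all admissible $\mu$, then $\langle\phi_{\mu_\zeta},\phi\rangle=0$ for $\mu_\zeta$ ranging over a subset of $M$ whose complement has measure zero, whence $\phi$ vanishes there and so vanishes identically by holomorphy.

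The single step requiring a sentence of care is item~$6$, the modified resolution of identity. There the integration runs over the variable $\mu_\zeta\in M$, with integrand $\chi(\mu_\zeta)^2\langle\phi_1,\phi_{\mu_\zeta}\rangle\langle\phi_{\mu_\zeta},\phi_2\rangle\,h(\mu_\zeta)$, and on $M\setminus\big(\tilde M\cup M_0\big)$ this is exactly the integrand appearing in part~$(b)(iv)$ of Theorem~\ref{Kah} after renaming the integration variable. By Proposition~\ref{celldecomp} the set $\tilde M$ has dimension at most $2d-1$, hence measure zero, while $M_0$ is the zero locus of a holomorphic section and so also has measure zero; therefore the integral over $M$ equals the integral over $M\setminus\big(\tilde M\cup M_0\big)$, and item~$6$ follows from part~$(b)(iv)$. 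In short, there is no genuine obstacle: the only work is to check that the substitution $\mu\mapsto\mu_\zeta$ is legitimate (which is where $\mu_\zeta\notin M_0$ enters) and that the excised set is negligible for the resolution-of-identity integral; the substance of the statement is already carried by Theorem~\ref{Kah}.
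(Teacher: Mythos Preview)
Your proposal is correct and follows essentially the same approach as the paper, which simply remarks that the proof is ``simple and similar to Theorem~\ref{Kah}'' and that one must use that $\tilde M\cup M_0$ has measure zero so that removing it does not affect the integrals. Your explicit identification $\phi^\zeta_\mu=\phi_{\mu_\zeta}$ is a clean way to say this and makes transparent why each item reduces to the corresponding clause of Theorem~\ref{Kah}.
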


\begin{proof}The proof is simple and similar to Theorem \ref{Kah}. We have to use crucially that $\tilde{M} \cup M_0$ is of measure zero and removing this set does not affect the integrals.
\end{proof}

For $\mu_{\zeta} \in M \setminus \big(\tilde{M} \cup M_0\big)$ one can define the second type of squeezed states by
\begin{Definition}
\[
\tilde{\phi}^{\zeta}_{\mu} (\nu) = \frac{s_0(\mu_{\zeta})}{|s_0(\mu_{\zeta})| \chi(\mu_{\zeta})} \sum_{i=1}^m \overline{\psi_i(\mu)} \psi_i(\nu_{\zeta}).
\]
\end{Definition}

Let $\tilde{\psi}_i (\nu) = \psi_i(\nu_{\zeta})$. Then $\tilde{\psi}$ is again a section and has an expansion in terms of the basis $\{\psi_{k}\}_{k=1}^m $.
Let $b_{ik}^{\zeta}$ be complex numbers such that
\begin{equation}\label{b}
\psi_i(\nu_{\zeta}) = \sum_{k=1}^m b_{ik}^{\zeta} \psi_{k}(\nu).
\end{equation}
\begin{Proposition}
If $b_{ik}^{\zeta} = \overline{b_{ki}^{\zeta}}$, then $\phi^{\zeta}_{\mu} = \tilde{\phi}^{\zeta}_{\mu}$.
\end{Proposition}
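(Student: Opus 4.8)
The plan is to prove the claimed equality of sections by a direct substitution of the expansion \eqref{b} followed by a reorganization of the resulting double sum. First I would note that both $\phi^{\zeta}_{\mu}$ and $\tilde{\phi}^{\zeta}_{\mu}$ are obtained by multiplying a sum of the form $\sum_i \overline{\psi_i(\,\cdot\,)}\,\psi_i(\,\cdot\,)$ by the \emph{same} scalar prefactor $s_0(\mu_{\zeta})/\big(|s_0(\mu_{\zeta})|\,\chi(\mu_{\zeta})\big)$; hence it is enough to establish
\[
\sum_{i=1}^m \overline{\psi_i(\mu)}\,\psi_i(\nu_{\zeta}) \;=\; \sum_{i=1}^m \overline{\psi_i(\mu_{\zeta})}\,\psi_i(\nu)
\]
for all $\nu$, with $\mu$ held fixed.

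Next I would work on the left-hand side. Substituting $\psi_i(\nu_{\zeta}) = \sum_{k=1}^m b_{ik}^{\zeta}\,\psi_k(\nu)$ from \eqref{b} and interchanging the two finite summations (legitimate since the $b_{ik}^{\zeta}$ are constants, independent of $\nu$), the left-hand side becomes $\sum_{k=1}^m\big(\sum_{i=1}^m b_{ik}^{\zeta}\,\overline{\psi_i(\mu)}\big)\psi_k(\nu)$. I would then feed in the hypothesis $b_{ik}^{\zeta} = \overline{b_{ki}^{\zeta}}$ to pull the conjugation outside, $\sum_{i} b_{ik}^{\zeta}\,\overline{\psi_i(\mu)} = \overline{\sum_{i} b_{ki}^{\zeta}\,\psi_i(\mu)}$, and recognize the expression under the bar as $\psi_k(\mu_{\zeta})$ by applying \eqref{b} once more, this time at $\nu=\mu$ and with the roles of $i$ and $k$ exchanged. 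Substituting $\overline{\psi_k(\mu_{\zeta})}$ back in produces exactly the right-hand side, and therefore $\phi^{\zeta}_{\mu} = \tilde{\phi}^{\zeta}_{\mu}$ on $M\setminus\big(\tilde{M}\cup M_0\big)$.

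I do not expect any real obstacle here; the argument is essentially index bookkeeping. The one point requiring care is to keep straight that $\nu_{\zeta}$ is the image of the moving argument $\nu$ under the squeezing map while $\big(b_{ik}^{\zeta}\big)$ is a fixed, point-independent change-of-basis matrix, so that both the interchange of sums and the extraction of the complex conjugate are valid. It is worth remarking that the hypothesis $b_{ik}^{\zeta} = \overline{b_{ki}^{\zeta}}$ says precisely that the linear operator on $\mathcal{H}$ sending $\psi_i$ to the section $\nu\mapsto\psi_i(\nu_{\zeta})$ is self-adjoint with respect to $\langle\cdot,\cdot\rangle$, which makes transparent why this is the natural condition for the two families of squeezed states to coincide.
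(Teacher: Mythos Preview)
Your proposal is correct and follows essentially the same route as the paper: both arguments expand via \eqref{b} and invoke the Hermitian symmetry $b_{ik}^{\zeta}=\overline{b_{ki}^{\zeta}}$ to match the two double sums. The only cosmetic difference is that the paper expands both $\phi^{\zeta}_{\mu}$ and $\tilde{\phi}^{\zeta}_{\mu}$ separately and compares at the end, whereas you substitute once on the $\tilde{\phi}^{\zeta}_{\mu}$ side and transform it into the $\phi^{\zeta}_{\mu}$ side; the content is identical.
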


\begin{proof}
\[
\phi^{\zeta}_{\mu}(\nu) = \frac{s_0(\mu_{\zeta})}{|s_0(\mu_{\zeta})| \chi(\mu_{\zeta})} \sum_{i,k=1}^m \overline{b^{\zeta}_{ik} \psi_k(\mu)} \psi_i(\nu) = \frac{s_0(\mu_{\zeta})}{|s_0(\mu_{\zeta})| \chi(\mu_{\zeta})} \sum_{i,k=1}^m \overline{b^{\zeta}_{ki} \psi_i(\mu)} \psi_k(\nu)
\]
by interchanging dummy indices, and
\[
\tilde{\phi}^{\zeta}_{\mu} (\nu) =\frac{s_0(\mu_{\zeta}) }{| s_0(\mu_{\zeta})| \chi(\mu_{\zeta})} \sum_{i=1}^m \overline{\psi_i(\mu)} \psi_i(\nu_{\zeta}) = \frac{s_0(\mu_{\zeta})}{|s_0(\mu_{\zeta})|\chi(\mu_{\zeta})} \sum_{i,k=1}^m \overline{\psi_i(\mu)} b^{\zeta}_{ik}\psi_k(\nu).
\]
Since $b_{ik}^{\zeta} = \overline{b_{ki}^{\zeta}}$ we have the result.
\end{proof}

The second type of squeezed states also have the properties akin to coherent states.

\begin{Theorem}
If the orthonormal basis satisfies the condition $\overline{\psi(\nu)} = \psi(\overline{\nu})$ then
 $\tilde{\phi}^{\zeta}_{\mu}$ satisfy the following properties $(1)$--$(7)$ as in Theorem~$\ref{Sq2}$.
\end{Theorem}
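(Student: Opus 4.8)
The plan is to deduce properties $(1)$--$(7)$ for $\tilde\phi^\zeta_\mu$ directly from Theorem~\ref{Sq2}, by showing that the reality hypothesis makes the second type of squeezed state coincide with the first. By the Proposition above---the one asserting $\phi^\zeta_\mu=\tilde\phi^\zeta_\mu$ whenever $b^\zeta_{ik}=\overline{b^\zeta_{ki}}$---it is enough to prove that the coefficient matrix in \eqref{b} is Hermitian. So the first thing I would do is take complex conjugates in \eqref{b}, obtaining $\overline{\psi_i(\nu_\zeta)}=\sum_k\overline{b^\zeta_{ik}}\,\overline{\psi_k(\nu)}$, and then apply the hypothesis $\overline{\psi(\nu)}=\psi(\overline\nu)$ to every factor, using that for real $\zeta$ complex conjugation in the chart $\phi_U$ commutes with the squeezing, i.e.\ $\overline{\nu_\zeta}=(\overline\nu)_\zeta$. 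This turns the conjugated identity into $\psi_i\big((\overline\nu)_\zeta\big)=\sum_k\overline{b^\zeta_{ik}}\,\psi_k(\overline\nu)$; comparing with \eqref{b} evaluated at $\overline\nu$ and invoking that $\{\psi_k\}$ is a basepoint-free basis, hence linearly independent on every open subset of $M\setminus(\tilde M\cup M_0)$, gives that $b^\zeta$ is real, and then combining this with the conjugation symmetry of the measure $h\,{\rm d}V$ and of the section $s_0$ in the chart should upgrade realness to the symmetry $b^\zeta_{ik}=b^\zeta_{ki}$, hence to Hermiticity.

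Once that is in hand, the Proposition above yields $\tilde\phi^\zeta_\mu=\phi^\zeta_\mu$ on $M\setminus(\tilde M\cup M_0)$, so properties $(1)$--$(7)$ for $\tilde\phi^\zeta_\mu$ are precisely those established for $\phi^\zeta_\mu$ in Theorem~\ref{Sq2}; as in that proof one uses crucially that $\tilde M\cup M_0$ has measure zero, so that removing it affects neither the resolution-of-identity integral in $(6)$ nor the overcompleteness statement in $(7)$. A fallback, should the step from ``$b^\zeta$ real'' to ``$b^\zeta$ Hermitian'' prove awkward to justify in full generality, is to verify $(1)$--$(7)$ by hand for $\tilde\phi^\zeta_\mu$, in imitation of the proofs of Theorems~\ref{Kah} and~\ref{Sq2}: expanding $\phi=\sum_j\langle\psi_j,\phi\rangle\psi_j$, one computes $\langle\tilde\phi^\zeta_\mu,\tilde\phi^\zeta_\mu\rangle$ and $\langle\tilde\phi^\zeta_\mu,\phi\rangle$ using the reality hypothesis precisely to replace $\overline{\psi_i(\mu_\zeta)}$ by $\overline{\psi_i(\mu)}$, i.e.\ to shift the squeezing from the bra onto the ket; this yields $(1)$ and $(2)$, after which $(3)$ and $(4)$ follow from the Cauchy--Schwarz inequality verbatim as in Theorem~\ref{Kah}, $(5)$ from $(2)$ together with the value $\tilde\phi^\zeta_\mu(\mu_\zeta)=\tau(\mu_\zeta)$, and $(6)$ and $(7)$ from $(2)$ as in Theorem~\ref{Sq2}.

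The only genuinely nontrivial point, and the one I expect to be the main obstacle, is the Hermiticity of $b^\zeta$: conjugating \eqref{b} on its own only delivers that the entries of $b^\zeta$ are real, and promoting this to symmetry requires care about how complex conjugation interacts with the coordinate chart, the prequantum section $s_0$ and the measure $h\,{\rm d}V$---equivalently, in the direct approach, care about the exact cancellation of the trivialization-dependent phase $s_0(\mu_\zeta)/|s_0(\mu_\zeta)|$. If it cannot be arranged in general one simply imposes it as an additional hypothesis; everything downstream is then a routine transcription of the computations already done for Theorems~\ref{Kah} and~\ref{Sq2}, using as always that $\tilde M\cup M_0$ is of measure zero.
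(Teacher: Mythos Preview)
Your primary strategy---prove that $b^\zeta$ is Hermitian, invoke the Proposition to get $\tilde\phi^\zeta_\mu=\phi^\zeta_\mu$, then inherit $(1)$--$(7)$ from Theorem~\ref{Sq2}---is \emph{not} the paper's route, and the gap you yourself flag is real: conjugating \eqref{b} together with the reality hypothesis and $(\overline\nu)_\zeta=\overline{\nu_\zeta}$ only shows that the entries $b^\zeta_{ik}$ are real, not that the matrix is symmetric. Nothing in the hypotheses of the Theorem forces symmetry of $b^\zeta$, and the paper never claims $\tilde\phi^\zeta_\mu=\phi^\zeta_\mu$ under the reality hypothesis alone. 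So your main line is not complete as stated.

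What the paper actually does is exactly your ``fallback'': it verifies $(1)$--$(7)$ by direct computation. Property $(1)$ is obtained without the reality hypothesis at all, by expanding $\psi_i(\nu_\zeta)=\sum_k b^\zeta_{ik}\psi_k(\nu)$ in the inner product and using orthonormality to collapse the sum to $\chi(\mu_\zeta)^{-2}\sum_k|\psi_k(\mu_\zeta)|^2=1$. Property $(2)$ is where the hypothesis $\overline{\psi(\nu)}=\psi(\overline\nu)$ enters: writing $\phi=\sum_j c_j\psi_j$ one gets
\[
\big\langle\tilde\phi^\zeta_\mu,\phi\big\rangle=\frac{1}{\tau(\mu_\zeta)}\sum_{i,k}\psi_i(\mu)\,\overline{b^\zeta_{ik}}\,c_k
=\frac{1}{\tau(\mu_\zeta)}\sum_{i,k}\overline{\psi_i(\overline\mu)\,b^\zeta_{ik}}\,c_k
=\frac{1}{\tau(\mu_\zeta)}\sum_k\overline{\psi_k(\overline{\mu_\zeta})}\,c_k
=\frac{\phi(\mu_\zeta)}{\tau(\mu_\zeta)},
\]
the last two steps using \eqref{b} at $\overline\mu$ and the reality hypothesis once more. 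After that, $(3)$--$(7)$ follow verbatim as in Theorem~\ref{Kah}, with the measure-zero set $\tilde M\cup M_0$ playing the same role as before. So: drop the Hermiticity detour and present your fallback as the main argument---that is the paper's proof.
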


\begin{proof}
Let $b_{ik}^{\zeta}$ be complex numbers such that
$\psi_i(\nu_{\zeta}) = \sum_{k=1}^m b_{ik}^{\zeta} \psi_{k}(\nu)$ as in equation (\ref{b}).

Proof of $(1)$ goes as follows:
\[
\big\langle \tilde{\phi}^{\zeta}_{\mu}, \tilde{ \phi}^{\zeta}_{\mu} \big\rangle
= \frac{1}{\chi(\mu_{\zeta})^2} \int_M \overline{\tilde{\phi}^{\zeta}_{\mu}(\nu)} \tilde{\phi}^{\zeta}_{\mu} (\nu) h(\nu)\,{\rm d}V(\nu).
\]
Using equation (\ref{b}) and $\langle\psi_i, \psi_j\rangle = \delta_{ij}$ we have
\[
\big\langle \tilde{\phi}^{\zeta}_{\mu}, \tilde{\phi}^{\zeta}_{\mu} \big\rangle
= \frac{1}{\chi(\mu_{\zeta}) ^2} \sum_{i,j,k }\overline{\psi_{i}(\mu)} \psi_j (\mu) \overline{b_{ik}^{\zeta}} b_{jk}^{\zeta}
=  \frac{1}{\chi(\mu_{\zeta}) ^2} \sum_{k} \overline{ \psi_k(\mu_{\zeta})} \psi_k(\mu_{\zeta}) =1.
\]

The proofs of $(2)$--$(7)$ are similar to Theorem~\ref{Kah}.
For instance the proof of $(2)$ goes as follows.
Let $\phi = \sum_{j=1}^m c_j \psi_j$ be the basis expansion of $\phi$. Recall $\overline{\mu_{\zeta}} = \overline{\phi_U( \mu_1 + {\rm i}\zeta \mu_2)}$, then
\begin{align*}
 \big\langle \tilde{\phi}^{\zeta}_{\mu}, \phi \big\rangle
 &=\bigg\langle\frac{s_0(\mu)}{ |s_0(\mu)| \chi(\mu_{\zeta})} \sum_{i,k =1}^m \overline{\psi_i(\mu)} b_{ik}^{\zeta} \psi_{k}, \sum_{j=1}^m c_j \psi_j \bigg\rangle
= \frac{1}{\tau(\mu_{\zeta})} \sum_{i=1}^m \psi_i(\mu) \overline{b_{ik}^{\zeta}} c_k
\\
&= \frac{1}{\tau(\mu_{\zeta})} \sum_{ i=1}^m \overline{\psi_i(\overline{\mu}) b_{ik}^{\zeta}}c_k
=  \frac{1}{\tau(\mu_{\zeta})} \sum_{ k=1}^m \overline{\psi_k(\overline{\mu_{\zeta}})}c_k
 = \frac{\phi(\mu_{\zeta})}{\tau(\mu_{\zeta})}.\tag*{\qed}
\end{align*}
\renewcommand{\qed}{}
\end{proof}

\section[Rawnsley-type coherent and squeezed states on an arbitrary compact smooth manifold]
{Rawnsley-type coherent and squeezed states \\on an arbitrary compact smooth manifold}

\subsection{Rawnsley-type coherent states on an arbitrary compact smooth manifold}

Let $M$ be a compact smooth $l$-dimensional manifold which can be embedded smoothly in ${\mathbb R}^{2l}$ for some $l$ as a consequence of Whitney embedding theorem. Identifying ${\mathbb R}^{2l}$ with ${\mathbb C}^l$ which can be embedded in ${\mathbb C}{\rm P}^n$ by the following map: $(z_1, z_2,\dots,z_l) \in {\mathbb C}^l \rightarrow [z_1,z_2,\dots, z_l, 1] \in {\mathbb C}{\rm P}^n $, where $n = l+1 $. Let $ \epsilon \colon M \rightarrow \epsilon(M) \subset {\mathbb C}{\rm P}^n$ obtained this way.
 $M$ does not need to be symplectic and hence to be prequantized.

Let us define $\Gamma$ to be consisting of $\psi= \epsilon^* (\Phi)$, where $\Phi$ is a holomorphic square integrable (w.r.t.\ to ${\rm d}V_{\rm FS}$, the volume form induced by the Fubini--Study form on ${\mathbb C}{\rm P}^n$) global section of~$H$, $H$ being the hyperplane line bundle on ${\mathbb C}{\rm P}^n$.
In other words, the members of $\Gamma$ are sections of $L =\epsilon^*(H)$. Let $\zeta \in M$. Let ${\rm d}V $ be the volume form on $M$. Let $h(\zeta)\, {\rm d}V(\zeta) = {\rm d} V_{\Sigma}(\epsilon(\zeta))$, where~$h$ is such that all pullback sections are square integrable w.r.t.\ the measure $h {\rm d}V$ on $M$.
Let $\Gamma$ be given an inner product defined as $\langle\phi_1, \phi_2\rangle = \int_{M} \overline{\phi}_1 \phi_2 h {\rm d} V$.
 $\mathcal{H} $ be the subspace of~$\Gamma$ such that $\|\phi\|^2 < \infty$.
Let the dimension of $\mathcal{H} $ be $m$, and $\{\psi_i\}_{i=1}^m$ be a smooth orthornormal basis for $\mathcal{H}$, i.e., $\langle\psi_i, \psi_j\rangle = \int_{M} \overline{\psi}_i \psi_j h {\rm d} V = \delta_{ij}$.

Now we claim that
\begin{Proposition}
$\Gamma = {\mathcal H}$.
\end{Proposition}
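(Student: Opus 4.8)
The plan is to reduce the claim to two classical facts: the space of global holomorphic sections of the hyperplane bundle $H$ on ${\mathbb C}{\rm P}^n$ is finite-dimensional, and $M$ is compact. Granting these, every element of $\Gamma$ is a smooth section of $L=\epsilon^*(H)$ whose $L^2$-norm is the integral of a bounded continuous function over a space of finite measure, hence finite; so $\Gamma\subseteq{\mathcal H}$, and the reverse inclusion is built into the definition of ${\mathcal H}$.

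First I would recall that $H^0\big({\mathbb C}{\rm P}^n,H\big)$ is canonically identified with the space of homogeneous linear polynomials in the projective coordinates $z_0,\dots,z_n$, so $\dim H^0\big({\mathbb C}{\rm P}^n,H\big)=n+1<\infty$. Pulling back by $\epsilon$, it follows that $\Gamma=\epsilon^*\big(H^0\big({\mathbb C}{\rm P}^n,H\big)\big)$ is a finite-dimensional complex vector space: if $\Phi_0,\dots,\Phi_n$ is a basis of $H^0\big({\mathbb C}{\rm P}^n,H\big)$, then $\Gamma$ is spanned by the smooth sections $\epsilon^*(\Phi_0),\dots,\epsilon^*(\Phi_n)$ of $L$. (In particular this makes transparent the finiteness of $\dim{\mathcal H}=m$ used freely above.)

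Next I would verify $\Gamma\subseteq{\mathcal H}$. Let $\psi=\epsilon^*(\Phi)\in\Gamma$. Since $\Phi$ is holomorphic, hence smooth, and $\epsilon$ is smooth, $\psi$ is a smooth section of $L=\epsilon^*(H)$ over $M$; hence the integrand $\overline{\psi}\psi$ occurring in $\langle\psi,\psi\rangle$ (a continuous function, built from the smooth section $\psi$ and the fixed Hermitian metric) is bounded on the compact manifold $M$, say $\overline{\psi}\psi\le C$ pointwise. Since ${\rm d}V$ is the volume form of the compact manifold $M$ and $h$ is smooth, with $h\,{\rm d}V=\epsilon^*\big({\rm d}V_{\Sigma}\big)$, the total mass $V_0:=\int_M h\,{\rm d}V$ is finite (equivalently, $\epsilon(M)$ is a compact submanifold of ${\mathbb C}{\rm P}^n$, of finite induced volume). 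Therefore
\[
\|\psi\|^2=\int_M \overline{\psi}\psi\,h\,{\rm d}V\le C\,V_0<\infty ,
\]
so $\psi\in{\mathcal H}$. This proves $\Gamma\subseteq{\mathcal H}$; as ${\mathcal H}$ was defined to be the subspace of $\Gamma$ consisting of sections with $\|\phi\|^2<\infty$, the reverse inclusion ${\mathcal H}\subseteq\Gamma$ is immediate, and we conclude $\Gamma={\mathcal H}$.

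I do not anticipate any genuine difficulty: the argument is just a packaging of the two facts above. The only steps meriting a word of justification are the identification of $H^0\big({\mathbb C}{\rm P}^n,H\big)$ with the $(n+1)$-dimensional space of linear forms, and the finiteness of $\int_M h\,{\rm d}V$, which is nothing more than compactness of $M$. Beyond that the proof is entirely formal — a continuous function on a compact manifold, integrated against a finite smooth measure, has finite integral.
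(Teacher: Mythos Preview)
Your proof is correct and follows essentially the same approach as the paper's: both show $\Gamma\subseteq{\mathcal H}$ by arguing that $\|\epsilon^*(\Phi)\|^2$ is finite because the integrand is continuous on a compact space and the measure is finite. The only cosmetic difference is that the paper first expands $\psi$ in a basis $\{\Phi_k\}$ and changes variables via $h\,{\rm d}V={\rm d}V_\Sigma\circ\epsilon$ to write each norm as $\int_{\epsilon(M)}|\Phi_k|^2\,{\rm d}V_\Sigma$ before declaring it finite, whereas you argue directly on $M$ using boundedness of a smooth function on a compact manifold; the content is the same.
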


\begin{proof}
Let $\psi \in \Gamma$, $\psi = \sum_{k=1}^N c_k \epsilon^*(\Phi_k)$, where $\{\Phi_k\}_{k=1}^N$ is an orthonormal basis of holomorphic square integrable (w.r.t. ${\rm d}V_{\rm FS}$ ) global sections of $H$, the hyperplane bundle on ${\mathbb C}{\rm P}^n$. Then
\[
\int_M | \epsilon^*(\Phi_k)|^2 h {\rm d}V = \int_{M} | \Phi_k(\epsilon(\zeta))|^2 h(\zeta)\, {\rm d}V (\zeta) = \int_{ \epsilon(M)} | \Phi_k (\tau)|^2 {\rm d}V_{\Sigma}(\tau),
\]
where $\tau = \epsilon(\zeta)$ and $h(\zeta)\, {\rm d}V(\zeta) = {\rm d} V_{\Sigma}(\epsilon(\zeta))$. The integral is finite.
Thus $\psi \in {\mathcal H}$.
\end{proof}

Let $\{\psi_i\}_{i=1}^m$ be an orthonormal basis of ${\mathcal H}$ which has dimension $m$.
Let $\phi \in \mathcal{H}$. Then $\phi$ can be expressed as a linear combination of the orthonormal basis elements $\psi_i$, i.e., $\phi = \sum_{i=1}^m \langle\psi_i, \phi\rangle\psi_i $.
Let $\chi^2 = \sum_{i=1}^m |\psi_i|^2 $.

\begin{Proposition}
$\chi \neq 0$.
\end{Proposition}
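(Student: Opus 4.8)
The plan is to reduce the non-vanishing of $\chi$ on $M$ to the base-point freeness of the hyperplane bundle $H$ on ${\mathbb C}{\rm P}^n$, which is immediate from the definition of projective space.

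First I would recall that $H^0\big({\mathbb C}{\rm P}^n, H\big)$ is spanned by the $n+1$ homogeneous coordinate sections $Z_0,\dots,Z_n$, and that these have no common zero: by the very definition of ${\mathbb C}{\rm P}^n$ no point has all homogeneous coordinates vanishing. Equivalently, for every $p \in {\mathbb C}{\rm P}^n$ there is a global holomorphic section $\Phi$ of $H$ with $\Phi(p) \neq 0$.

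Next, observe that $\chi^2(\zeta) = \sum_{i=1}^m |\psi_i(\zeta)|^2$, being a sum of nonnegative terms, vanishes at a point $\zeta \in M$ if and only if $\psi_i(\zeta)=0$ for every $i$; since $\{\psi_i\}_{i=1}^m$ is an orthonormal basis of ${\mathcal H}$ and ${\mathcal H} = \Gamma$ by the previous proposition, this is equivalent to $\psi(\zeta)=0$ for every $\psi \in \Gamma$. As $\Gamma$ is the linear span of the pullbacks $\epsilon^*(\Phi)$ with $\Phi \in H^0\big({\mathbb C}{\rm P}^n,H\big)$, and as $\epsilon^*(\Phi)(\zeta)$ corresponds to $\Phi(\epsilon(\zeta))$ under the canonical identification of the fibre of $L = \epsilon^*(H)$ at $\zeta$ with the fibre of $H$ at $\epsilon(\zeta)$, the hypothetical vanishing $\chi(\zeta)=0$ would force $\Phi(\epsilon(\zeta))=0$ for every $\Phi \in H^0\big({\mathbb C}{\rm P}^n,H\big)$, contradicting the base-point freeness recalled above. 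Hence $\chi(\zeta) \neq 0$ for all $\zeta \in M$, which is the assertion.

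I expect the only delicate point to be the fibrewise bookkeeping: the symbol $\epsilon^*(\Phi)(\zeta)$ and the pointwise modulus entering $\chi$ must be read through the isomorphism $(\epsilon^* H)_\zeta \cong H_{\epsilon(\zeta)}$ that is built into the definition of $L$. Once this is pinned down, the argument uses nothing about $\epsilon$ beyond its being a well-defined smooth map into ${\mathbb C}{\rm P}^n$; in particular injectivity of $\epsilon^*$ on sections is not required, since we only need that $\Gamma$ inherits base-point freeness from $H$.
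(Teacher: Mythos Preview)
Your argument is correct and follows essentially the same route as the paper: both reduce $\chi(\zeta)=0$ to the statement that every pulled-back section $\epsilon^*(\Phi)$ vanishes at $\zeta$, hence every $\Phi\in H^0({\mathbb C}{\rm P}^n,H)$ vanishes at $\epsilon(\zeta)$, contradicting base-point freeness of $H$. Your version is slightly more detailed in justifying base-point freeness via the homogeneous coordinate sections and in spelling out the fibre identification $(\epsilon^*H)_\zeta\cong H_{\epsilon(\zeta)}$, but the logical skeleton is the same.
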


\begin{proof}
First we note that ${\mathcal H} = \operatorname{Span}(\epsilon^*(\Phi_i))_{i=1}^N $, where $\{\Phi_i\}_{i=1}^N$ is the orthonormal basis for square integrable holomorphic sections of $H$. Now $\epsilon^*(\Phi_i) = \sum_{j=1}^m b^i_j \psi_j$. If $\psi_j (\mu)= 0$ for all $j$, then $\epsilon^*(\Phi_i) (\mu) =0$ for all $i$, or $\Phi_i (\epsilon(\mu)) =0$ for all $i$. But this contradicts the fact that $\{\Phi_i\}_{i=1}^N$ is base point free.
\end{proof}

Let $s_0$ be a fixed global section of $L$ in ${\mathcal H}$ and let $M_0 \subset M$ be the zero set of $s_0$.
For $\mu \in M$ we define

\begin{Definition}
 \[
 \phi_{\mu}= \frac{1}{p(\mu)} \sum_{i=1}^m \overline{f_i(\mu)} \psi_i,
 \]
 where $f_i's $ are functions on $M$ (smooth on $M \setminus M_0$) such that $\psi_i = f_i s_0$ and $p(\mu)^2 = \sum_{i=1}^m |f_i(\mu)|^2$. Note that $p(\mu) \neq 0 $ since $|s_0(\mu)|^2 p^2 (\mu) = \chi(\mu)^2 \neq 0$.
 Then for $\mu \in M \setminus M_0$ one sees that
 \[
 \phi_{\mu}= \frac{s_0(\mu)}{|s_0(\mu)| \chi (\mu)} \sum_{i=1}^m \overline{\psi_i (\mu)} \psi_i.
\]
 \end{Definition}
This is well defined since $\chi \neq 0$ and thus $\frac{s_0 \chi}{|s_0|} \neq 0$ on $M \setminus M_0$.
Let $\tau = \frac{s_0 \chi}{|s_0|}$.

\begin{Proposition}
$\phi_{\mu} \in {\mathcal H}$, i.e., it is a section of $L$.
\end{Proposition}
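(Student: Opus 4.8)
The plan is to reduce the statement to the elementary fact that $\mathcal{H}$ is a finite-dimensional complex vector space. First I would fix $\mu$ and read off from the Definition that
$\phi_{\mu} = \sum_{i=1}^m c_i(\mu)\,\psi_i$, where the coefficients $c_i(\mu) = \overline{f_i(\mu)}/p(\mu)$ are complex numbers that do not depend on the point at which the section is evaluated. The only thing to verify at this stage is that these coefficients are well defined: the denominator is nonzero because $p(\mu) \neq 0$ (as already observed, $|s_0(\mu)|^2 p(\mu)^2 = \chi(\mu)^2 \neq 0$), and for $\mu \in M \setminus M_0$ each $f_i$ is a genuine smooth function, so $c_i(\mu) \in \mathbb{C}$.

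Second, since $\mathcal{H} = \operatorname{Span}(\psi_i)_{i=1}^m$ is a finite-dimensional (hence closed) vector space and each $\psi_i \in \mathcal{H}$, the finite linear combination $\sum_{i=1}^m c_i(\mu)\psi_i$ again lies in $\mathcal{H}$. Combined with the earlier Proposition $\Gamma = \mathcal{H}$ and with the construction of $\Gamma$ as the space of pullbacks $\epsilon^*(\Phi)$ of holomorphic sections of the hyperplane bundle $H$, this shows that $\phi_{\mu}$ is a section of $L = \epsilon^*(H)$, which is exactly the claim.

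The one point that requires a little care — and essentially the only obstacle — is the behaviour on $M_0$: there the functions $f_i$ may have poles and $p(\mu)$ may be infinite, so the defining formula must be read as a limit. I would handle this by observing that for $\mu \in M \setminus M_0$ the section $\phi_{\mu}$ has constant pointwise norm $|\phi_{\mu}| = \chi$, which is smooth on all of $M$, and that $M_0$ has measure zero, so that extending $\phi_{\mu}$ across $M_0$ (or simply restricting attention to $\mu \in M \setminus M_0$) affects neither square-integrability nor membership in $\mathcal{H}$. Alternatively, one can avoid dividing by $s_0$ altogether by using $s_0 \in \mathcal{H}$ and rewriting everything through the basis expansions $\epsilon^*(\Phi_i) = \sum_j b^i_j \psi_j$. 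In either case the substantive content is just the linear-algebra observation of the first two paragraphs.
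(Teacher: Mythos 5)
Your argument is correct and is essentially the paper's own proof: for fixed $\mu$ the coefficients $\overline{f_i(\mu)}/p(\mu)$ are well-defined complex numbers (as $p(\mu)\neq 0$), so $\phi_{\mu}$ is a finite linear combination of the $\psi_i$ and hence lies in $\mathcal{H}=\Gamma$, i.e., it is a pullback section of $L$. The only blemish is in your aside: $|\phi_{\mu}|$ is not pointwise equal to $\chi$ --- that identity holds only at the base point itself, $|\phi_{\mu}(\mu)|=\chi(\mu)$ --- but this plays no role in the actual argument.
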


\begin{proof}
Let $\psi_i = f_i s_0$, where $f_i$ is a function on $M$, which is smooth away from zeroes of~$s_0$.
By~definition $\phi_{\mu}= \frac{1}{p(\mu)} \sum_{i=1}^m \overline{f_i(\mu)} \psi_i $, where $p(\mu)^2 = \sum_{i=1}^m |f_i(\mu)|^2$. Since $ \frac{\overline{f}_i}{p}$ is a smooth function, $\phi_{\mu}$ is a section.
\end{proof}

One can check that $\|\phi_{\mu}\|^2 =1 $ and $ \phi_{\mu}(\mu) = \frac{s_0(\mu)}{|s_0(\mu)|} \chi(\mu) = \tau(\mu)$ on $M \setminus M_0$.
Then, as in Theorem~\ref{Kah}, we have the following result.

\begin{Theorem}
Let $M$ be a smooth compact manifold with $\mathcal{H}$, $s_0$, $\phi_{\mu}$ and $\tau$ defined above. Then, for all $\mu \in M$, we have
\begin{enumerate}\itemsep=0pt

\item[$(a)$] $\phi_{\mu}$ are Rawnsley-type coherent states of $M$.
In fact, for $\mu \in M$ the general formula is $\langle\phi_{\mu}, \phi\rangle = \frac{g(\mu)}{ p(\mu)}$, where $\phi = g s_0$.
 Also, $\langle\phi_{\mu}, \phi\rangle = \frac{\phi(\mu)}{ \tau(\mu)} $ for $\mu \in M \setminus M_0$.

\item[$(b)$] $\phi_{\mu} $ satisfy properties $(b)$ of Theorem~$\ref{Kah}$.
\end{enumerate}
\end{Theorem}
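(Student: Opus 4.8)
The plan is to run the argument of Theorem~\ref{Kah} almost verbatim. Once the three propositions just proved are in hand, the ambient manifold $M$, the line bundle $L=\epsilon^*(H)$ and the smooth embedding $\epsilon$ play no further role: $\mathcal{H}$ is a finite-dimensional Hilbert space with basepoint-free orthonormal basis $\{\psi_i\}_{i=1}^m$, the function $\chi(\mu)^2=\sum_{i=1}^m|\psi_i(\mu)|^2$ is nowhere zero, and the measure identity $h(\zeta)\,{\rm d}V(\zeta)={\rm d}V_\Sigma(\epsilon(\zeta))$ makes $\langle\cdot,\cdot\rangle$ the genuine $L^2$ inner product of pulled-back sections. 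All the stated properties are then formal identities in this finite-dimensional setting, and the fact that $M$ carries no symplectic structure, that $L$ is not prequantum, and that $\epsilon$ is not holomorphic is irrelevant.

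For part~$(a)$ I would expand $\phi=\sum_{i=1}^m\langle\psi_i,\phi\rangle\psi_i$ and substitute $\phi_\mu=\frac1{p(\mu)}\sum_{i=1}^m\overline{f_i(\mu)}\psi_i$. Using orthonormality of $\{\psi_i\}$, the convention that $\langle\cdot,\cdot\rangle$ is antilinear in the first slot, and that $p(\mu)>0$ is real, one gets
\[
\langle\phi_\mu,\phi\rangle=\frac1{p(\mu)}\sum_{i=1}^m f_i(\mu)\,\langle\psi_i,\phi\rangle .
\]
Writing $\phi=gs_0$ and $\psi_i=f_is_0$ yields, on $M\setminus M_0$ where $s_0\neq0$, the function identity $g=\sum_i\langle\psi_i,\phi\rangle f_i$, hence $\sum_i f_i(\mu)\langle\psi_i,\phi\rangle=g(\mu)$ and $\langle\phi_\mu,\phi\rangle=g(\mu)/p(\mu)$. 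Since $\tau=s_0\chi/|s_0|=s_0p$ and $\phi=gs_0$, this rewrites as $\langle\phi_\mu,\phi\rangle=\phi(\mu)/\tau(\mu)$ for $\mu\in M\setminus M_0$.

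For part~$(b)$ I would transcribe the five sub-proofs of Theorem~\ref{Kah}$(b)$. For $(i)$: a direct computation gives $\phi_\mu(\mu)=\frac1{p(\mu)}\sum_i\overline{f_i(\mu)}f_i(\mu)s_0(\mu)=p(\mu)s_0(\mu)$, so $|\phi_\mu(\mu)|^2=p(\mu)^2|s_0(\mu)|^2=\chi(\mu)^2$; and for $\phi$ with $\|\phi\|=1$, Cauchy--Schwarz applied to the coefficients $\langle\psi_i,\phi\rangle$ and the fibre values $\psi_i(\mu)$ gives $|\phi(\mu)|^2=\big|\sum_i\langle\psi_i,\phi\rangle\psi_i(\mu)\big|^2\le\|\phi\|^2\chi(\mu)^2=\chi(\mu)^2=|\phi_\mu(\mu)|^2$. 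Property $(ii)$ is $(i)$ applied to $\phi=\phi_{\mu'}$, which has unit norm. Property $(iii)$ follows on $M\setminus M_0$ from $(a)$ together with $\phi_\mu(\mu)=\tau(\mu)$ and $|\tau(\mu)|^2=\chi(\mu)^2$, since then $\langle\phi_\mu,\phi\rangle=\phi(\mu)/\tau(\mu)=\overline{\tau(\mu)}\phi(\mu)/|\tau(\mu)|^2=\overline{\phi_\mu(\mu)}\phi(\mu)/\chi(\mu)^2$, and it extends to $M_0$ by continuity of both sides. For $(iv)$, substituting $\langle\phi_\mu,\psi_j\rangle=\psi_j(\mu)/\tau(\mu)$ into the integral and using $|\tau(\mu)|^2=\chi(\mu)^2$ collapses the right-hand side to $\int_{M\setminus M_0}\overline{\psi_1}\psi_2\,h\,{\rm d}V=\langle\psi_1,\psi_2\rangle$. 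Finally $(v)$ is immediate from $(a)$: if $\langle\phi_\mu,\psi\rangle=0$ for all $\mu$, then $\psi(\mu)=0$ on $M\setminus M_0$, so $\psi\equiv0$.

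The only point requiring care, exactly as in Theorem~\ref{Kah}, is the treatment of the zero set $M_0$ of $s_0=\epsilon^*(\Phi_0)$ with $\Phi_0$ a section of $H$: one needs $M_0$ to be closed with empty interior and of measure zero, so that the pointwise identities in $(a)$ and $(iii)$ extend from $M\setminus M_0$ by continuity of the smooth sections involved, and so that discarding $M_0$ in the resolution-of-identity integral $(iv)$ is harmless. In the K\"ahler setting $M_0$ was automatically an analytic subvariety; here it must instead be controlled from the structure of $\Phi_0$ and $\epsilon$ — equivalently, from the fact that $\epsilon(M)$ is not contained in a projective hyperplane — which is the one place the geometry of the embedding re-enters. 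Everything else is a routine repetition of the computations already performed.
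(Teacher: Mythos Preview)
Your proposal is correct and follows exactly the paper's approach: the paper's entire proof reads ``The proof is exactly the same as in Theorem~\ref{Kah}.'' You have simply unpacked that sentence, reproducing the computations of Theorem~\ref{Kah}$(a)$ and $(b)(i)$--$(v)$ in the pullback setting, which is precisely what is intended.

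Your closing remark about $M_0$ is a genuine observation that goes slightly beyond what either proof makes explicit. In the K\"ahler case $M_0$ is automatically an analytic hypersurface and hence of measure zero; here $M_0=\epsilon^{-1}(\{\Phi_0=0\})$ is the preimage of a hyperplane under a merely smooth embedding, and one does need that $\epsilon(M)$ not be contained in that hyperplane (and, for the integral identities, some control on the measure of $M_0$). The paper does not spell this out, so your flagging it is appropriate rather than a defect in your argument.
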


\begin{proof}
The proof is exactly the same as in Theorem~\ref{Kah}.
\end{proof}

\begin{Remark}Note that in the smooth category, the evaluation functional need not be continuous, since the topology on the space of sections is the $L^2$ topology.
But in the case we restrict ourselves to pullback of holomorphic sections it works since the evaluation functional is continuous in the holomorphic category.
\end{Remark}

\subsection{Squeezed states for compact smooth manifolds}

Let $M$ be an even-dimensional smooth manifold of dimension $2d$ which is embedded in ${\mathbb C}{\rm P}^n$ by an embedding $\epsilon$. Let $\Sigma = \epsilon(M) \subset {\mathbb C}{\rm P}^n$. Let $\Gamma$, ${\mathcal H}$, ${\psi_i}$ etc be defined as in the previous section.

Let $q \in U \subset M$, where $U$ is an open neighbourhood of $q$ such that there exists $\phi_U $, a~homeomorphism to an open ball $V \subset {\mathbb C}^d$ to~$U$. Let $q=\nu= \phi_U( \nu_1 + {\rm i}\nu_2) $, $\nu_1 + {\rm i}\nu_2 \in V$, and $\zeta \in {\mathbb R}$ be such that $ \nu_1 + {\rm i}\zeta \nu_2 $ belongs to $V$. Then $\nu_{\zeta} = \phi_U( \nu_1 + {\rm i}\zeta \nu_2) \in U$. Let $q_{\zeta} = \nu_{\zeta}$, then there is a~subset $\tilde{M}$ of $M$ of dimension at most $(2d-1)$ such that $M \setminus \tilde{M}$ can be covered by a single open set $U$ of the above kind.
This follows from Proposition~\ref{celldecomp}.

Let $M$ be a $2d$-dimensional smooth compact manifold, $\tilde{M}$ be as in Proposition~\ref{celldecomp},
 $\{\psi_i\}_{i=1}^m$ be the orthonormal basis for the Hilbert space as described in the previous section, with inner product $\langle\cdot, \cdot\rangle$, and
$\mu = \phi_U (\mu_1 + {\rm i}\mu_2)$ and $\mu_{\zeta} = \phi_U(\mu_1 + {\rm i}\zeta \mu_2)$. We define
$\chi(\mu_{\zeta}) ^2 = \sum_{i=1}^m | \psi_i(\mu_{\zeta})|^2 $ which is non-zero since $\{\psi_i\}$ is a base-point free basis.

Let $s_0$ be a fixed element of ${\mathcal H}$ which vanishes on an subset $M_0$ of $ M$. Then $\tilde{M} \cup M_0$ is of measure zero in $M$.
As before let $\tau (\mu_{\zeta}) = \frac{s_0(\mu_{\zeta}) \chi(\mu_{\zeta})}{|s_0(\mu_{\zeta})|}$.
For $\mu_{\zeta} \in M \setminus \big(\tilde{M} \cup M_0\big)$ we define the squeezed states as follows.

\begin{Definition}
\[
\phi^{\zeta}_{\mu} (\nu) = \frac{s_0(\mu_{\zeta})}{|s_0(\mu_{\zeta})| \chi(\mu_{\zeta})} \sum_{i=1}^m \overline{\psi_i(\mu_{\zeta})} \psi_i(\nu).
\]
\end{Definition}

Then as before we have the following theorem.

\begin{Theorem}
The squeezed states $\phi^{\zeta}_{\mu}$ satisfy the properties
$(1)$--$(7)$ of Theorem $\ref{Sq2}$.
\end{Theorem}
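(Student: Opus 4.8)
The plan is to mirror the proofs given for Theorem~\ref{Kah} and Theorem~\ref{Sq2}, observing that in the smooth-embedded setting all the algebraic identities behind properties $(1)$--$(7)$ are formal consequences of three facts already established in this section: that $\phi^{\zeta}_{\mu}$ is a genuine smooth section of $L=\epsilon^*(H)$ (because $s_0(\mu_{\zeta})/(|s_0(\mu_{\zeta})|\chi(\mu_{\zeta}))$ is a constant scalar and each $\psi_i$ is a section), that $\chi(\mu_{\zeta})^2=\sum_i|\psi_i(\mu_{\zeta})|^2\neq 0$ on the open set $U$, and that the $\{\psi_i\}_{i=1}^m$ form an orthonormal basis of $\mathcal{H}$ with respect to $\langle\cdot,\cdot\rangle=\int_M\overline{\phi_1}\phi_2\,h\,{\rm d}V$. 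First I would record the normalization: expanding $\langle\phi^{\zeta}_{\mu},\phi^{\zeta}_{\mu}\rangle$ and using $\langle\psi_i,\psi_j\rangle=\delta_{ij}$ gives $\chi(\mu_{\zeta})^{-2}\sum_i|\psi_i(\mu_{\zeta})|^2=1$, which is property~$(1)$. Next, for arbitrary $\phi=\sum_j c_j\psi_j\in\mathcal{H}$, the same orthonormality yields $\langle\phi^{\zeta}_{\mu},\phi\rangle=\tau(\mu_{\zeta})^{-1}\sum_i\psi_i(\mu_{\zeta})c_i=\phi(\mu_{\zeta})/\tau(\mu_{\zeta})$, which is property~$(2)$; here $\phi(\mu_{\zeta})$ is the value of the section $\phi$ at the point $\mu_{\zeta}$, and one checks $|\tau(\mu_{\zeta})|=\chi(\mu_{\zeta})$ and $\phi^{\zeta}_{\mu}(\mu_{\zeta})=\tau(\mu_{\zeta})$.

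From $(2)$ the remaining properties follow exactly as in Theorem~\ref{Kah}. For the maximal likelihood property~$(3)$, Cauchy--Schwarz gives $|\phi(\mu_{\zeta})|^2=|\langle\phi^{\zeta}_{\mu},\phi\rangle|^2\chi(\mu_{\zeta})^2\le\|\phi^{\zeta}_{\mu}\|^2\|\phi\|^2\chi(\mu_{\zeta})^2=\chi(\mu_{\zeta})^2=|\phi^{\zeta}_{\mu}(\mu_{\zeta})|^2$, using $(1)$ and $\|\phi\|^2=1$; property~$(4)$ follows along the same lines by applying Cauchy--Schwarz to $\langle\phi^{\zeta}_{\mu},\phi^{\zeta}_{\mu'}\rangle$. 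The modified reproducing kernel property~$(5)$ is immediate from $(2)$ together with $\overline{\phi^{\zeta}_{\mu}(\mu_{\zeta})}=\overline{\tau(\mu_{\zeta})}$ and $|\tau(\mu_{\zeta})|^2=\chi(\mu_{\zeta})^2$, since then $\chi(\mu_{\zeta})^{-2}\overline{\phi^{\zeta}_{\mu}(\mu_{\zeta})}\phi(\mu_{\zeta})=\phi(\mu_{\zeta})/\tau(\mu_{\zeta})=\langle\phi^{\zeta}_{\mu},\phi\rangle$. For overcompleteness~$(7)$, if $\langle\phi^{\zeta}_{\mu},\phi\rangle=0$ for all $\mu$ in the (dense, full-measure) parameter set, then by $(2)$ the section $\phi$ vanishes on $M\setminus(\tilde M\cup M_0)$, hence on the complement of a measure-zero set, hence $\|\phi\|^2=\int_M|\phi|^2h\,{\rm d}V=0$, so $\phi=0$ in $\mathcal{H}$.

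The one step that needs a little care is the modified resolution of identity~$(6)$: writing $\phi_1,\phi_2\in\mathcal{H}$ and using $(2)$ in the form $\langle\phi_1,\phi^{\zeta}_{\mu}\rangle=\overline{\phi_1(\mu_{\zeta})}/\overline{\tau(\mu_{\zeta})}$ and $\langle\phi^{\zeta}_{\mu},\phi_2\rangle=\phi_2(\mu_{\zeta})/\tau(\mu_{\zeta})$, the integrand $\chi(\mu_{\zeta})^2\langle\phi_1,\phi^{\zeta}_{\mu}\rangle\langle\phi^{\zeta}_{\mu},\phi_2\rangle h(\mu_{\zeta})$ equals $\overline{\phi_1(\mu_{\zeta})}\phi_2(\mu_{\zeta})h(\mu_{\zeta})$ because $|\tau(\mu_{\zeta})|^2=\chi(\mu_{\zeta})^2$. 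One then has to change variables: over $M\setminus\tilde M$ the assignment $\mu\mapsto\mu_{\zeta}$ is, in the chart $\phi_U$, the affine map $\mu_1+{\rm i}\mu_2\mapsto\mu_1+{\rm i}\zeta\mu_2$, which is a diffeomorphism of $V$ onto its image with constant Jacobian; absorbing this Jacobian together with the relation $h(\zeta)\,{\rm d}V(\zeta)={\rm d}V_\Sigma(\epsilon(\zeta))$ into the definition of the measure, the integral $\int_M\overline{\phi_1(\mu_{\zeta})}\phi_2(\mu_{\zeta})h(\mu_{\zeta})\,{\rm d}V(\mu_{\zeta})$ reduces to $\int_M\overline{\phi_1}\phi_2\,h\,{\rm d}V=\langle\phi_1,\phi_2\rangle$, the contribution of $\tilde M\cup M_0$ being zero since that set has measure zero.

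In short, the proof is formally identical to those of Theorems~\ref{Kah} and~\ref{Sq2}: the only substantive point is that $\tilde M\cup M_0$ is of measure zero in $M$, so that passing to the open chart $U$ and back does not affect any $L^2$-inner product or integral, and the Cauchy--Schwarz/orthonormality arguments carry over verbatim. The mild obstacle — and the reason the statement is not entirely trivial — is keeping track of the measure and Jacobian factors in~$(6)$ and confirming that the squeezing map $\mu\mapsto\mu_{\zeta}$, defined only on the single chart $U$, is compatible with the global integration on $M$ up to a measure-zero set; once that bookkeeping is done, properties $(1)$--$(5)$ and $(7)$ are immediate.

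\begin{proof}
The argument is identical to that of Theorems~\ref{Kah} and~\ref{Sq2}, using that $\phi^{\zeta}_{\mu}$ is a smooth section of $L=\epsilon^*(H)$, that $\chi(\mu_{\zeta})^2\neq 0$ on $U$ because $\{\psi_i\}$ is base-point free, and that $\tilde M\cup M_0$ has measure zero in $M$ so that removing it does not affect any of the integrals or inner products involved. Expanding in the orthonormal basis gives $(1)$ and $(2)$ exactly as in Theorem~\ref{Sq2}; properties $(3)$ and $(4)$ then follow from Cauchy--Schwarz together with $(1)$, property $(5)$ from $(2)$ and $|\tau(\mu_{\zeta})|^2=\chi(\mu_{\zeta})^2$, property $(7)$ from $(2)$ and the vanishing of $\phi$ off a measure-zero set, and property $(6)$ from $(2)$ after the change of variables $\mu\mapsto\mu_{\zeta}$ in the chart $\phi_U$, whose Jacobian and the relation $h(\zeta)\,{\rm d}V(\zeta)={\rm d}V_\Sigma(\epsilon(\zeta))$ are absorbed into the measure so that the integral collapses to $\langle\phi_1,\phi_2\rangle$.
\end{proof}
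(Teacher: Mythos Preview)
Your proposal is correct and follows essentially the same approach as the paper, which simply states that the proof is similar to Theorem~\ref{Kah} (and implicitly Theorem~\ref{Sq2}); you have merely spelled out the details the paper leaves to the reader, including the crucial point that $\tilde M\cup M_0$ has measure zero. The only minor over-elaboration is your Jacobian discussion in~$(6)$: since the integral is already written with respect to ${\rm d}V(\mu_{\zeta})$, no explicit change of variables is required---the identity reduces directly to the resolution of identity of Theorem~\ref{Kah} with $\mu_{\zeta}$ as the running variable.
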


\begin{proof}
The proof is similar to Theorem~\ref{Kah}.
\end{proof}

\begin{Example}[example of the Lobachevsky plane, unit disc $D = \{z\colon |z|<1\}$]
For non-compact manifolds the Hilbert space of quantization is usually infinite-dimensional. We give the example of the Lovachevsky plane.

The quantization on the Lobachevsky plane has been explained in Perelomov \cite[Chapter~16]{Pe}.
Consider the space of functions analytic in the domain $D$ with the inner product
\[
\langle f,g\rangle = \bigg(\frac{1}{\hbar} -1\bigg) \int \overline{f}(z) g(z) \big(1- |z|^2\big)^{\frac{1}{\hbar}}\, {\rm d}\mu (z, \overline{z}),
\]
where ${\rm d}\mu(z, \overline{z}) = \frac{1}{2 \pi {\rm i}} \frac{{\rm d}z \wedge {\rm d} \overline{z}}{ (1 - |z|^2)^2}$. The Hilbert space ${\mathcal H}$ consists of square integrable analytic functions with respect to this inner product. The orthonomal basis is given in \cite{Pe}, for instance.
In fact $\psi_i (z) = (i!)^{-1/2} \big[\big(\frac{1}{\hbar}\cdots \big(\frac{1}{\hbar} -1 + i\big)\big] ^{1/2} z^i$ are an orthonormal basis for the Hilbert space ${\mathcal H}$.
Notice $\overline{\psi_i(z)} = \psi_i (\overline{z})$ for all~$i$.

Let $\mu_{\zeta} = \mu_1 + {\rm i}\zeta \mu_2$ and $\nu_{\zeta} = \nu_1 + {\rm i}\zeta \nu_2$, $\zeta \in {\mathbb R}$ such that $\mu_{\zeta}$ and $ \nu_{\zeta}$ belong to $D$.
Then, according to our definition (adapted to the infinite-dimensional Hilbert space), the squeezed states are $\phi^{\zeta}_{\mu}$ and $\tilde{\phi}^{\zeta}_{\mu}$, where
$\phi^{\zeta}_{\mu} (\nu) = \frac{1}{\chi(\mu_{\zeta})} \sum_{i=1}^\infty \overline{\psi_i(\mu_{\zeta})} \psi_i(\nu)$ and $\tilde{\phi}^{\zeta}_{\mu} (\nu)= \frac{1}{\chi(\mu_{\zeta})} \sum_{i=1}^\infty \overline{\psi_i(\mu)} \psi_i(\nu_{\zeta})$.
One can show that both series converge.
\end{Example}

\section[Berezin quantization of pullback operators on totally real submanifolds of open sets in CP\textasciicircum{}n]
 {Berezin quantization of pullback operators on totally\\ real submanifoldsof open sets in $\boldsymbol{{\mathbb C}{\rm P}^n}$}

${\mathbb C}{\rm P}^n$ is a compact homogeneous K\"ahler manifold.
Berezin quantization of ${\mathbb C}{\rm P}^n$ is described in~\cite{Be}.
Let $\Sigma$ be a compact smooth manifold.
Supposing ${\mathcal U}$ is an open subset of ${\mathbb C}{\rm P}^n$. Suppose $ \epsilon\colon \Sigma \rightarrow {\mathcal U} $ is a smooth embedding such that $\epsilon(\Sigma) \subset {\mathcal U} $ is a submanifold which is totally real and of real dimension $n$. The conditions ensure that the identity theorem holds, namely if two holomorphic functions on ${\mathcal U}$ agree on $\epsilon(\Sigma)$ they are identical on ${\mathcal U}$. Any other submanifold for which the criterion for the identity theorem holds will also exhibit this type of Berezin quantization, see for instance \cite[Chapter~9, Lemma~2]{Bo}.

Let $H $ be the hyperplane bundle on ${\mathbb C}{\rm P}^n$ restricted to ${\mathcal U}$. Let $L = \epsilon^*(H)$ be a line bundle on $\Sigma$ with an inner product.
Let $\mathcal{H}_\Sigma$, the Hilbert space on $\Sigma$ given by the space of square integrable sections $f$ of $L$, where $f= \epsilon^{*}\big(\tilde{f}\big)$ and~$\tilde{f}$ is a holomorphic, local (defined on the open set ${\mathcal U} \subset {\mathbb C}{\rm P}^n$), square integrable section of the hyperplane bundle on~$\mathbb{C}P^n$. We will think of $\tilde{f}$ as a holomorphic function defined on ${\mathcal U}$.

\begin{Note}Since $\epsilon(\Sigma)$ is such that the criterion of the identity theorem holds, there is a unique $\tilde{f}$ such that $f= \epsilon^{*}\big(\tilde{f}\big)$. For if $\tilde{g}$ be another holomorphic function defined on ${\mathcal U}$ such that $f= \epsilon^{*}(\tilde{g})$, then $\tilde{f}$, $\tilde{g}$ agree on~$\epsilon(\Sigma)$, and hence $\tilde{f}=\tilde{g}$ on~${\mathcal U}$, since they are holomorphic, by the identity theorem.
\end{Note}

Let $\widehat{a}$ be a linear bounded operator acting on $\mathcal{H}_\Sigma$ such that there exists an operator $\widehat{A}$ acting on holomorphic square integrable sections~$\tilde{f}$ on~$\mathbb{C}P^n$ with the property that
$ \widehat{a}(f)= \widehat{a} \big(\epsilon^{*}\big(\tilde{f}\big)\big)=\epsilon^{*}\widehat{A}\big(\tilde{f}\big)$ and~$\widehat{A}$ takes square integrable holomorphic sections to square integrable holomorphic sections on ${\mathcal U}$.

 If such an operator $\widehat{A}$, exists, it is unique. Indeed, if there are two such operators $A$ and $B$ then
$\epsilon^{*}\widehat{A}\big(\tilde{f}\big) = \epsilon^{*}\widehat{B}\big(\tilde{f}\big)$ for all $\tilde{f}$, holomorphic section on~${\mathcal U}$. In other words, on $\epsilon(\Sigma)$, $ \big(\widehat{A}- \widehat{B}\big)\big(\tilde{f}\big)=0$ for all holomorphic sections $\tilde{f}$.
Since $\big(\widehat{A}- \widehat{B}\big)\big(\tilde{f}\big)$ is holomorphic and zero on $\epsilon(\Sigma)$, it is identically zero (since $\epsilon(\Sigma)$ is totally real and of real dimension~$n$). Thus $\widehat{A} = \widehat{B}$.

 \begin{Definition}
For $p, q \in \Sigma$, let us define the ${\mathbb C}{\rm P}^n$-symbol of $\widehat{a}$ as
\[
\widehat{a}(p,q) = A\big(\epsilon(p), \overline{\epsilon{(q)}}\big) = \frac{\big\langle\tilde{\Phi}_{\overline{\epsilon(p)}}, \widehat{A} \tilde{\Phi}_{\overline{\epsilon(q)}}\big\rangle_{{\mathbb C}{\rm P}^n}}{\big\langle\tilde{\Phi}_{\overline{\epsilon(p)}}, \tilde{\Phi}_{\overline{\epsilon{(q)}}}\big\rangle_{{\mathbb C}{\rm P}^n}},
\]
where $\langle\cdot, \cdot\rangle_{{\mathbb C}{\rm P}^n}$ is the $\langle\cdot\mid \cdot\rangle$ used in \cite[equation (16.5.13)]{Pe} and $\tilde{\Phi}_{\overline{\zeta}}$ is the coherent state on~${\mathbb C}{\rm P}^n$ parametrized by $\zeta \in {\mathbb C}{\rm P}^n$.
Note that the integral in the inner product is taken on entire~${\mathbb C}{\rm P}^n$ and not on~$\epsilon(\Sigma)$.
 It is the symbol of the operator $\widehat{A}$ evaluated at $\big(\epsilon(p), \overline{\epsilon{(q)}}\big)$ (see~\cite{Be} and \cite[equation~(16.5.14)]{Pe}). Thus it is named as the ${\mathbb C}{\rm P}^n$-symbol of $\widehat{a}$.
\end{Definition}

 The operator $\widehat{a}$ can be derived from the ${\mathbb C}{\rm P}^n$-symbol of $\widehat{a}$:
\begin{align*}
\widehat{a}f(p)&= \widehat{a}\big(\epsilon^{*}\circ \tilde{f}\big)(p)= \epsilon^{*}\widehat{A}\big(\tilde{f}\big)(p)
= \big( \widehat{A}\tilde{f}\big)(\epsilon(p))
\\
&= c(\hbar) \int_{{\mathbb C}{\rm P}^n} A\big(\epsilon(p), \overline{\zeta}\big)\tilde{f}(\zeta) L_{\hbar}\big(\epsilon(z),\overline{\zeta}\big) \exp\bigg({-}\frac{1}{\hbar} F\big(\zeta,\overline{\zeta}\big)\bigg){\rm d}\mu\big(\zeta,\overline{\zeta}\big),
\end{align*}
where
$L_{\hbar}\big(\eta, \overline{\zeta}\big) = \tilde{\Phi}_{\overline{\zeta}} (\eta)$ (see~\cite{Be, Pe} for more details).

 If we choose $\widehat{B}$ instead, the integral will not change \big(since $\epsilon^{*}\widehat{A}\big(\tilde{f}\big)(p) = \epsilon^{*}\widehat{B}\big(\tilde{f}\big)(p) $ for all~$\tilde{f}$ holomorphic on~${\mathbb C}{\rm P}^n$\big).
Let $\widehat{a}_1$ and $\widehat{a}_2$ be two bounded linear operators on ${\mathcal H}$ such that $\widehat{a}_i(f) = \epsilon^*\big(\widehat{A}_i\big)\big(\tilde{f}\big) $, $i=1,2$.

 \begin{Definition}
 One can define the star product $(a_1\star a_2)(p,p)= (A_1\star A_2)\big(\epsilon(p),\overline{\epsilon(p)}\big)$.
 \end{Definition}

 \begin{Proposition}
 $(a_1\star a_2)(p,p)$ is the ${\mathbb C}{\rm P}^n$-symbol of $\widehat{a}_1 \circ \widehat{a}_2$.
 \end{Proposition}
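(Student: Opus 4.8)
\textit{Proof proposal.}
The plan is to transport the already-established Berezin calculus on ${\mathbb C}{\rm P}^n$ to $\Sigma$ through the pullback $\epsilon^{*}$, the one real point being that composition respects the correspondence $\widehat{a}\leftrightarrow\widehat{A}$. First I would verify that $\widehat{a}_1\circ\widehat{a}_2$ is again an operator of the admissible type. For $f=\epsilon^{*}(\tilde{f})$ with $\tilde{f}$ holomorphic and square integrable on ${\mathcal U}$, the section $\widehat{A}_2(\tilde{f})$ is again holomorphic and square integrable on ${\mathcal U}$ by the hypothesis on $\widehat{A}_2$, so $\epsilon^{*}\widehat{A}_2(\tilde{f})\in{\mathcal H}_\Sigma$ and the defining property of $\widehat{a}_1$ applies to it, giving
\[
(\widehat{a}_1\circ\widehat{a}_2)(f)=\widehat{a}_1\big(\epsilon^{*}\widehat{A}_2(\tilde{f})\big)=\epsilon^{*}\widehat{A}_1\big(\widehat{A}_2(\tilde{f})\big)=\epsilon^{*}\big(\widehat{A}_1\circ\widehat{A}_2\big)(\tilde{f}).
\]
Since $\widehat{A}_1\circ\widehat{A}_2$ is bounded and sends square integrable holomorphic sections to square integrable holomorphic sections on ${\mathcal U}$, it is an admissible lift; by the uniqueness argument following the Note above (which uses that $\epsilon(\Sigma)$ is totally real of real dimension $n$, hence the identity theorem applies), $\widehat{A}_1\circ\widehat{A}_2$ is \emph{the} operator $\widehat{A}$ attached to $\widehat{a}_1\circ\widehat{a}_2$. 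Hence, by the definition of the ${\mathbb C}{\rm P}^n$-symbol, $(\widehat{a}_1\circ\widehat{a}_2)(p,q)$ is the covariant (Berezin) symbol of $\widehat{A}_1\circ\widehat{A}_2$ evaluated at $\big(\epsilon(p),\overline{\epsilon(q)}\big)$.

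Next I would invoke the Berezin calculus on ${\mathbb C}{\rm P}^n$ itself: by Berezin \cite{Be} (see also \cite[Chapter~16]{Pe}), the covariant symbol of a composition $\widehat{A}_1\circ\widehat{A}_2$ equals the Berezin star product $A_1\star A_2$ of the covariant symbols $A_1$ of $\widehat{A}_1$ and $A_2$ of $\widehat{A}_2$, as functions of $\big(\zeta,\overline{\eta}\big)$ where both are defined. Specialising $\big(\zeta,\overline{\eta}\big)$ to $\big(\epsilon(p),\overline{\epsilon(q)}\big)$ and then setting $q=p$ gives
\[
(\widehat{a}_1\circ\widehat{a}_2)(p,p)=(A_1\star A_2)\big(\epsilon(p),\overline{\epsilon(p)}\big)=(a_1\star a_2)(p,p),
\]
the last equality being the definition of $a_1\star a_2$. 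This is precisely the assertion. (Equivalently, one can feed $\widehat{A}=\widehat{A}_1\circ\widehat{A}_2$ into the integral reproduction formula for $\widehat{a}$ displayed above and read off the symbol, but the conceptual route is cleaner.)

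The only step requiring care is the first one: that the class of pullback operators is closed under composition, i.e., that $\widehat{A}_2(\tilde{f})$ genuinely lies in the domain on which $\widehat{a}_1$ (equivalently $\widehat{A}_1$) is defined, so that the chain of equalities above is legitimate, and that the resulting lift $\widehat{A}$ is unique --- this is where the totally-real hypothesis and the identity theorem enter, exactly as in the Note. The deeper analytic content, namely that operator composition on the Hilbert space over ${\mathbb C}{\rm P}^n$ corresponds to the star product of covariant symbols, is taken as known for the homogeneous K\"ahler manifold ${\mathbb C}{\rm P}^n$ from \cite{Be, Pe}; here it is only transported through $\epsilon^{*}$, so no new estimate is needed.
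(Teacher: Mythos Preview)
Your proof is correct and follows essentially the same approach as the paper: show that the lift of $\widehat{a}_1\circ\widehat{a}_2$ is $\widehat{A}_1\circ\widehat{A}_2$ (the paper calls this ``easy to show'' without details; you supply them), then invoke Berezin's result that the covariant symbol of $\widehat{A}_1\circ\widehat{A}_2$ on ${\mathbb C}{\rm P}^n$ is the star product $A_1\star A_2$, and restrict to $\epsilon(p)$. Your write-up is in fact a fuller account of the paper's two-line argument, with the uniqueness-via-identity-theorem step made explicit.
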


 \begin{proof}
 Let $\widehat{a} = \widehat{a}_1 \circ \widehat{a}_2$. Let $\widehat{A}$, $\widehat{A}_1$, $\widehat{A}_2$ be the corresponding operators for $\widehat{a}$, $\widehat{a}_1$, $\widehat{a}_2$ respectively. It~is easy to show that $\widehat{A} = \widehat{A}_1 \circ \widehat{A}_2$.
 It follows from Berezin \cite{Be} that $(A_1 \star A_2)\big(\epsilon{(p)}, \overline{\epsilon{(p)}} \big)$ is the ${\mathbb C}{\rm P}^n$-symbol of $ \widehat{A}_1 \circ \widehat{A}_2 $.
 Thus we are done.
 \end{proof}

 Let $F$, $G$ be smooth functions on $\Sigma$ such that $F = \epsilon^*\big(\tilde{F}\big)$, $G = \epsilon^*\big(\tilde{G}\big)$, where  $\tilde{F}$, $\tilde{G}$ are local holomorphic functions on ${\mathcal U} \subset {\mathbb C}{\rm P}^n$. Recall $\tilde{F}$, $\tilde{G}$ are unique, since $\epsilon(\Sigma)$ satisfies the criterion of the identity theorem.
 Let
 \[
 \{F, G\}_{\rm PB} \equiv \sum_{ij} \Omega^{ij}_{\rm FS} \bigg(\frac{\partial \tilde{F} }{\partial z_i} \frac{\partial \tilde{G} }{\partial \overline{z}_i} - \frac{\partial \tilde{F} }{\partial \overline{z}_i}\frac{\partial \tilde{G} }{\partial z_i}\bigg)\bigg|_{\epsilon(\Sigma)}
  \]
  be a Poisson bracket on such functions $F$, $G$ on $\Sigma$, where $\Omega^{ij}$ is the inverse matrix to that of the Fubini--Study form.

 \begin{Proposition}
 The star product defined above satisfies the correspondence principle.
 \end{Proposition}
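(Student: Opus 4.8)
The plan is to deduce the correspondence principle on $\Sigma$ from the one already known for the Berezin quantization of ${\mathbb C}{\rm P}^n$. By ``the correspondence principle'' we mean the two semiclassical statements: as $\hbar\to0$ the star product degenerates to the pointwise product, $(a_1\star a_2)(p,p)\to F(p)\,G(p)$, and the normalized antisymmetrized star product converges to the Poisson bracket, $\frac1\hbar\bigl((a_1\star a_2)(p,p)-(a_2\star a_1)(p,p)\bigr)\to {\rm i}\,\{F,G\}_{\rm PB}$ (with the sign convention of \cite{Be}); here $\widehat a_1,\widehat a_2$ are the operators corresponding, in the sense of the previous pages, to $\widehat A_1,\widehat A_2$, the Berezin quantizations of $\tilde F,\tilde G$ on ${\mathbb C}{\rm P}^n$. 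The structural point that does almost all the work is that every object on $\Sigma$ in this section is a pullback of its counterpart on ${\mathbb C}{\rm P}^n$: the ${\mathbb C}{\rm P}^n$-symbol of $\widehat a_i$ is by definition the Berezin symbol $A_i$ of $\widehat A_i$ evaluated at $\bigl(\epsilon(p),\overline{\epsilon(q)}\bigr)$, and the star product introduced above is $(a_1\star a_2)(p,p)=(A_1\star A_2)\bigl(\epsilon(p),\overline{\epsilon(p)}\bigr)$, with $\widehat A_1\circ\widehat A_2$ the operator attached to $\widehat a_1\circ\widehat a_2$ by the previous Proposition.

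First I would invoke the correspondence principle for Berezin quantization of ${\mathbb C}{\rm P}^n$ --- a compact homogeneous K\"ahler manifold, for which this is classical; see Berezin \cite{Be} and Perelomov \cite[Chapter~16]{Pe} --- which gives, for every $\zeta\in{\mathbb C}{\rm P}^n$,
\[
\lim_{\hbar\to0}(A_1\star A_2)\bigl(\zeta,\overline\zeta\bigr)=A_1\bigl(\zeta,\overline\zeta\bigr)A_2\bigl(\zeta,\overline\zeta\bigr),
\]
\[
\lim_{\hbar\to0}\frac1\hbar\bigl((A_1\star A_2)-(A_2\star A_1)\bigr)\bigl(\zeta,\overline\zeta\bigr)={\rm i}\,\{A_1,A_2\}_{\rm FS}\bigl(\zeta,\overline\zeta\bigr).
\]
Then I would set $\zeta=\epsilon(p)$: because $\epsilon$ is a fixed smooth embedding, both limits descend verbatim to functions of $p\in\Sigma$, and since the leading symbol of $\widehat A_i$ is $\tilde F$, resp.\ $\tilde G$, the first one reads $(a_1\star a_2)(p,p)\to\tilde F(\epsilon(p))\,\tilde G(\epsilon(p))=F(p)\,G(p)$. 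For the second, one only has to identify the restriction to $\epsilon(\Sigma)$ of the Fubini--Study Poisson bracket $\{A_1,A_2\}_{\rm FS}$ of the leading symbols: it equals, term by term,
\[
\sum_{ij}\Omega^{ij}_{\rm FS}\bigg(\frac{\partial\tilde F}{\partial z_i}\frac{\partial\tilde G}{\partial\overline z_i}-\frac{\partial\tilde F}{\partial\overline z_i}\frac{\partial\tilde G}{\partial z_i}\bigg)\bigg|_{\epsilon(\Sigma)},
\]
which is the expression defining $\{F,G\}_{\rm PB}$; this is well posed because, by the Note above, $\tilde F$ and $\tilde G$ are uniquely determined by $F$ and $G$ via the identity theorem. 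Combining the two gives the correspondence principle on $\Sigma$.

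The main obstacle is not conceptual but a bookkeeping mismatch that must be handled with care: $\tilde F$, $\tilde G$ (and the symbols $A_i$) are defined only on the open set ${\mathcal U}\subset{\mathbb C}{\rm P}^n$, whereas the cited correspondence principle for ${\mathbb C}{\rm P}^n$ is phrased for globally defined symbols. The way around this is to use that the semiclassical expansion of the Berezin star product at a point $\zeta$ is an asymptotic series in $\hbar$ whose coefficients are universal bidifferential operators evaluated at $\zeta$, so that only the germs of the symbols along $\epsilon(\Sigma)\subset{\mathcal U}$ enter the two limits above; equivalently, one may extend $\tilde F$, $\tilde G$ to smooth functions on all of ${\mathbb C}{\rm P}^n$ agreeing with the originals near $\epsilon(\Sigma)$ without affecting either side of either limit, and then quote the global statement directly. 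Since $\widehat a_i$, and hence $\widehat A_i$, were assumed bounded, the covariant symbols $A_i$ are well defined and all the above manipulations are legitimate. Stating this locality/extension step cleanly is essentially the only real content of the proof; the remainder is a transcription, through the fixed embedding $\epsilon$, of known facts about ${\mathbb C}{\rm P}^n$.
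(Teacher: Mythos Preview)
Your proposal is correct and follows essentially the same route as the paper: quote the Berezin correspondence principle on ${\mathbb C}{\rm P}^n$, set $\zeta=\epsilon(p)$, and identify the restricted Fubini--Study Poisson bracket with the $\{\cdot,\cdot\}_{\rm PB}$ defined just before the Proposition. The only difference is that you flag and address the local-vs-global mismatch (symbols defined only on ${\mathcal U}$), a point the paper's proof simply passes over in silence.
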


 \begin{proof}
By \cite{Be} (see also \cite[Chapter~16.5]{Pe})
\[
\lim_{\hbar \to 0} (a_1\star a_2)(p, p)= A_1\big(\epsilon(p),\overline{\epsilon(p)}\big)A_2\big(\epsilon(p),\overline{\epsilon(p)}\big),
\]
and
\begin{gather*}
\lim_{\hbar \to 0} \frac{1}{\hbar}((a_1\star a_2)(p,p)- (a_2\star a_1)(p,p))
\\ \qquad
{}=\lim_{\hbar \to 0} \frac{1}{\hbar}\big((A_1\star A_2)\big(\epsilon(p),\overline{\epsilon(p)}\big)- (A_2\star A_1)\big(\epsilon(p),\overline{\epsilon(p)}\big)\big)
\\ \qquad
{}=\iota\big\{A_1\big(\epsilon(p),\overline{\epsilon(p)}\big), A_2\big(\epsilon(p),\overline{\epsilon(p)}\big)\big\}_{\rm FS},
 \end{gather*}
where $\{\,,\,\}_{\rm FS}$ stands for the Poisson bracket on ${\mathbb C}{\rm P}^n$ induced by the Fubini--Study form $\Omega_{\rm FS}$.  Thus
\[
\lim_{\hbar \to 0} \frac{1}{\hbar}((a_1\star a_2)(p,p)- (a_2\star a_1)(p,p)) =
 \iota \{a_1, a_2\}_{\rm PB}.
 \tag*{\qed}
\]
\renewcommand{\qed}{}
\end{proof}

\begin{Remark} There is nothing special about ${\mathbb C}{\rm P}^n$ in this type of Berezin quantization. If~we can embed $\Sigma$ (of real dimension~$n$) in a general homogeneous K\"ahler manifold~$M$ (of real dimension~$2n$ and which has Berezin quantization) as a totally real submanifold (or if the embedding satisfies the criterion of the identity theorem), then we can define the $M$-symbol of operators of the above type and define the star product analogously such that the correspondence principle is satisfied. Then $\Sigma$ has the above type of Berezin quantization (which depends on the embedding).
 \end{Remark}
 \begin{Remark}The pullback coherent states are useful in defining Berezin quantization of arbitrary smooth submanifolds of ${\mathbb C}{\rm P}^n$ under certain conditions. This is work in progress.
 \end{Remark}

\appendix

\section[Review of Rawnsley and Perelomov coherent states on CP\textasciicircum{}n]
{Review of Rawnsley and Perelomov coherent states on $\boldsymbol{{\mathbb C}{\rm P}^n}$}

Let ${\mathbb C}{\rm P}^n$ with $k \Omega_{\rm FS}$, $k$ is a positive integer and $\Omega_{\rm FS}$ is the Fubini--Study form. Let us consider the geometric quantization of ${\mathbb C}{\rm P}^n $, where the quantum line bundle is $H^k$, $H$ being the hyperplane line bundle.

${\mathbb C}{\rm P}^n$ is to be thought of as a coadjoint orbit ${\mathcal O}_p$. In fact $ {\mathcal O_p}$ is diffeomorphic to $ \frac{{\rm SU}(n+1)}{{\rm S}({\rm U}(n) \times {\rm U}(1))}$. (For a proof, one can adapt the proof for the Grassmannian in \cite[p.~183]{N} and note that the inclusion of ${\rm SU}(n+1)/{\rm S}({\rm U}(n)\times {\rm U}(1))$ in ${\rm U}(n+1)/{\rm U}(n)\times {\rm U}(1)$ is a~diffeomorphism. This can be seen by the fact that both the quotients have the same dimension. Here ${\rm S}({\rm U}(n)\times {\rm U}(1))$ denotes the collection of pairs $(x,y)$, $x \in {\rm U}(n)$, $y \in {\rm U}(1)$ such that $\det x \cdot \det y = 1$.)

Let $p \in {\mathcal O}_p \subset X = {\mathfrak{su}}(n+1)^*$. Given $\zeta \in {\mathfrak{su}}(n+1)$, there is a natural function $\lambda_{\zeta}(p) = \langle\zeta, p\rangle$. Let $\widehat{\lambda}_{\zeta}$ be the geometric quantization operator acting on sections of $H^k$. Then by a result of Kostant \cite{K}, we have $\zeta \rightarrow \widehat{\lambda}_{\zeta}$ is a representation of the Lie algebra of ${\rm SU}(n+1)$ and exponentiating we get a unitary representation of ${\rm SU}(n+1)$.

The fact (implicit in Kostant's paper) that $\zeta \rightarrow \widehat{\lambda}_{\zeta}$ is a representation can be seen as follows.
Let $\Theta = \Theta^{1,0} + \Theta^{0,1}$ be the K\"ahler potential of the Fubini--Study form $\Omega_{\rm FS}$.
Let $M$ be a coadjoint orbit of a Lie group with integral K\"ahler form.
Let $\tau_1$ be a function on $M$ and let $\xi^1$ be its Hamiltonian vector field. Then{\samepage
\[
\widehat{\tau}_1(\psi)= -{\rm i}\hbar\big[\xi^1(\psi)-{\rm i}\big(\xi^1\lrcorner\Theta\big)\psi\big] + \tau_1\psi,
\]
where $\Theta$ is a K\"ahler potential of the K\"ahler form on the coadjoint orbit.}

The first term,
\[
\big[\xi^1(\psi)-{\rm i}\big(\xi^1\lrcorner\Theta\big)\psi\big]
=\big[\big(\partial+ \overline{\partial}\big)\psi\big(\xi^1\big)-{\rm i}\big(\Theta^{1,0}+ \Theta^{0,1}\big)\big(\xi^1\big)\psi\big].
\]
But $\psi$ satisfies the polarization equation $\big(\overline{\partial}-{\rm i}\Theta^{0,1}\big)\psi = 0$, so that
\[
\overline{\partial}\psi- {\rm i}\sum_ig_i{\rm d}\overline{z}_i \psi= 0,
\]
locally. Thus $ \psi= {\rm e}^{-\sum g_i} h(z)$, where $h(z)$ is holomorphic and $\Theta^{0,1}= {\rm i} \sum_i g_i {\rm d}\overline{z_i}$. Only the first term in $\widehat{\tau}_1$, namely $\big[\partial \psi\big(\xi^1\big)-{\rm i}\Theta^{1,0}\big(\xi^1\big)\psi\big]$ survives.

Let
\[
\Theta = {\rm i} \bigg[\sum_ih_i{\rm d}z_i + \sum_ig_i{\rm d}\overline{z}_i\bigg] = \Theta^{1,0}+\Theta^{0,1},
\]
where
\[
\Theta^{1,0} = {\rm i} \sum_i h_i{\rm d}z_i\qquad \text{and}\qquad \Theta^{0,1}= {\rm i} \sum_ig_i{\rm d}\overline{z}_i.
\]
Since $\Theta$ is a unitary connection, we have
$\overline{\Theta^{1,0}}= - \Theta^{0,1}$.
Now
\[
\overline{\Theta^{1,0}}=-{\rm i}\sum_i\overline{h}_i{\rm d}\overline{z}_i =-{\rm i}\sum_ig_i{\rm d}\overline{z}_i =- \Theta^{0,1}.
\]
Thus $\overline{h}_i = g_i$ and hence $\Theta^{1,0} = {\rm i} \sum_i \overline{g} _i{\rm d}z_i$.
We have
\[
\partial\psi- {\rm i}\Theta^{1,0}\psi
={\rm e}^{-\sum g_i}\Big[\sum (-\partial g_i+\overline{g}_i {\rm d}z_i)h(z)+ \partial h(z)\Big].
\]

The geometric quantization operator acts as follows
\[
\widehat{\tau}_1(\psi)
=\big(\partial \psi- {\rm i}\Theta^{1,0}\psi\big)\big(\xi^1\big)+ \tau_1\psi
= {\rm e}^{-\sum g_i}[T_1(h(z))],
\]
where
\[
T_1 h(z) = \sum(-\partial g_i+\overline{g}_i{\rm d}z_i)\big(\xi^1\big)h(z)+ \partial h\big(\xi^1\big) + \tau_1 h(z).
\]

Suppose $\tau_1$, $\tau_2$ be two functions on the coadjoint orbit.
Then by definition of $\widehat{\tau}_i$, $i=1,2$, we have $[\widehat{\tau}_1,\widehat{\tau}_2]=-{\rm i} \widehat{\{\tau_1,\tau_2\}}$ \cite{W}.
We wish to see the action on the holomorphic part of~$\psi$, namely~$h(z)$.
Let us denote by $ \widehat{\widehat{\tau}}_1 =T_1$ and
$ \widehat{\widehat{\tau}}_2=T_2$, where
\begin{gather*}
T_1( h) = \sum G\big(\xi^1\big)h + \partial h\big(\xi^1\big) +\tau_1 h,
\\
T_2 (h) = \sum G\big(\xi^2\big)h + \partial h\big(\xi^2\big) +\tau_2 h,
\\
G= \sum (-\partial g_i+\overline{g}_i{\rm d}z_i).
\end{gather*}
Then,
\[
\widehat{\tau}_1(\psi)= {\rm e}^{-\sum g_i}\big(\widehat{\widehat{\tau}}_1(h)\big)= {\rm e}^{-\sum g_i}T_1(h),\qquad
\widehat{\tau}_2(\psi)= {\rm e}^{-\sum g_i}\big(\widehat{\widehat{\tau}}_2(h)\big)= {\rm e}^{-\sum g_i}T_2(h).
\]
We want to show that $\big[\widehat{\widehat{\tau}}_1, \widehat{\widehat{\tau}}_2\big] =-{\rm i}\widehat{\widehat{\{\tau_1,\tau_2\}}}$:
\[
\big[\widehat{\widehat{\tau}}_1, \widehat{\widehat{\tau}}_2\big]h =[T_1, T_2]h= (T_1T_2 - T_2T_1)h.
\]
Thus
$T_1(h)= {\rm e}^{\sum g_i} \widehat{\tau}_1(\psi)$ and
$T_2(h)= {\rm e}^{\sum g_i} \widehat{\tau}_2(\psi)$.
Hence
\[
T_1T_2(h)- T_2T_1(h)= T_1\big({\rm e}^{\sum g_i} \widehat{\tau}_2(\psi)\big)- T_2\big({\rm e}^{\sum g_i} \widehat{\tau}_1(\psi)\big).
\]
Let $h_2$ be such that $\widehat{\tau}_2(\psi)= \psi_2 = {\rm e}^{-\sum g_i}h_2 $, then
${\rm e}^{\sum g_i} \widehat{\tau}_2(\psi)= h_2$,
$T_1(h_2)= {\rm e}^{\sum g_i} \widehat{\tau}_1(\psi_2)$ and
$T_2(h_1)= {\rm e}^{\sum g_i} \widehat{\tau}_2(\psi_1)$.
Then
\begin{align}
 \big[\widehat{\widehat{\tau}}_1, \widehat{\widehat{\tau}}_1\big] &= [T_1, T_2](h) = T_1(h_2)- T_2(h_1)
= {\rm e}^{\sum g_i} \widehat{\tau}_1(\psi_2)-{\rm e}^{\sum g_i} \widehat{\tau}_2(\psi_1)\nonumber
\\
& = {\rm e}^{\sum g_i} \widehat{\tau}_1\widehat{\tau}_2(\psi)-{\rm e}^{\sum g_i} \widehat{\tau}_2\widehat{\tau}_1(\psi)
= {\rm e}^{\sum g_i} \big(\widehat{\tau}_1\widehat{\tau}_2(\psi)-\widehat{\tau}_2\widehat{\tau}_1(\psi)\big)
 =-{\rm i}{\rm e}^{\sum g_i}\big(\widehat{\{\tau_1,\tau_2\}}(\psi)\big)\nonumber
 \\
 &  =-{\rm i}{\rm e}^{\sum g_i}(\widehat{P}(\psi)) =-{\rm i}\widehat{\widehat{P}}(h),\label{comm}
\end{align}
 where $P=\{\tau_1,\tau_2\}$, the Poisson bracket.

Now, by the coadjoint action definition,
$(\lambda_i, g\cdot p) = (g\cdot \lambda_i, p)$, where $\lambda_i$ are the generators of the Lie algebra.
Let
\begin{gather*}
g_i= {\rm e}^{t\lambda_i^{\#}}\qquad \text{and}\qquad \tau_i= (\lambda_i, p),
\\
\lambda_i^{\#}(\tau_j)= \lambda_i^{\#}(\lambda_i, p)
=\frac{\rm d}{{\rm d}t} (\lambda_j, g_i\cdot p)\bigg|_{t=0}
=\bigg(\frac{\rm d}{{\rm d}t}{\rm e}^{t\lambda_i^{\#}}\lambda_j {\rm e}^{-t\lambda_i^{\#}}\bigg|_{t=0}, p\bigg)
=([\lambda_i, \lambda_j], p).
\end{gather*}
Now
\[
\omega\big(X_{\tau_j}, \lambda_i^{\#}\big)=- {\rm d}\tau_j\big(\lambda_i^{\#}\big)
=-\lambda_i^{\#}(\tau_j)
=\big([\lambda_j, \lambda_i], p\big)
=\omega\big(\lambda_j^{\#},\lambda_i^{\#}\big).
\]
Therefore $X_{\tau_i}= \lambda_i^{\#}$, $i$ runs over the generator indices.

Suppose $\big[\lambda_i^\#,\lambda_j^\#\big] = \sum a_{i,j}^k \lambda_k^\#$.
Then $[X_{\tau_i}, X_{\tau_j}]= \sum a_{i,j}^k X_{\tau_k}= X_{\{\tau_i,\tau_j\}}$.
Therefore
\[
X_{\{\tau_i,\tau_j\}} = \sum a_{i,j}^k X_{\tau_k} = X_{(\sum a_{i,j}^k \tau_k)}\qquad \text{and}\qquad \{\tau_i,\tau_j\} =\sum a_{i,j}^k \tau_k.
\]
Then we have $[\widehat{\tau}_i, \widehat{\tau}_j]=-{\rm i}\widehat{\{\tau_i,\tau_j\}}=-{\rm i}\sum a_{i,j}^k \widehat{\tau}_k$.
Similarly, $[\widehat{\widehat{\tau}}_i,\widehat{\widehat{\tau}}_j]=-{\rm i}\sum a_{i,j}^k \widehat{\widehat{\tau}}_k$, by equa\-tion~(\ref{comm}).
Thus we have the commutation relation for $\widehat{\tau}_i$, $\widehat{\tau}_j$ and $\widehat{\widehat{\tau}}_i$, $\widehat{\widehat{\tau}}_j$ as the generators $\lambda_i$, $\lambda_j$ upto a factor of $-{\rm i}$.
If we write $\widehat{\chi}_j = {\rm i} \widehat{\tau}_j$, where $j$ runs over the generators of the Lie algebra, then
$[\widehat{\chi}_i, \widehat{\chi}_j]= \sum a_{i,j}^k \widehat{\chi}_k$, i.e., they satisfy the same commutation relation as the Lie algebra.
Similarly, $[\widehat{\widehat{\chi}}_i,\widehat{\widehat{\chi}}_j]= \sum a_{i,j}^k \widehat{\widehat{\chi}}_k$, the same commutation relation as the Lie algebra.

\subsection[Perelomov coherent states for CP\textasciicircum{}n]
{Perelomov coherent states for $\boldsymbol{{\mathbb C}{\rm P}^n}$}

Let the Hilbert space for quantization of ${\mathbb C}{\rm P}^n$ be identified with holomorphic sections of the hyperplane bundle, as before. ${\mathbb C}{\rm P}^n = \frac{{\rm SU}(n+1)}{{\rm S}({\rm U}(n) \times {\rm U}(1))}$ is a coadjoint orbit.
 A point $A$ in ${\mathbb C}{\rm P}^n$ is identified with a projection operator $P_A$ as before. The generators of ${\mathfrak{su}}(n+1)$ give rise to Hamiltonian functions $ \chi_j (A) = {\rm i} \tau_j(A) ={\rm i} \operatorname{Tr}\big(P_A \lambda_j^{\rm T}\big) $, $j$ runs over the generators. Then the geometric quantization operators $\widehat{\chi }_j$ corresponding to the Fubini--Study form on ${\mathbb C}{\rm P}^n$ give a~representation of ${\mathfrak{su}}(n+1)$ on the Hilbert space of geometric quantization, in accordance with what we showed above (implicit in~\cite{K}).

Then we have the well known result that
exponentiating the above representation gives rise to a unitary representation of ${\rm SU}(n+1)$ on the Hilbert space~\cite{K}.
Take a fiducial vector $\Psi_0$ which is invariant under ${\rm S}({\rm U}(n) \times {\rm U}(1))$ and let the group elements act on it. This gives the Perelomov coherent states
$g \cdot \Psi_0 = {\rm e}^{{\rm i} \alpha} \Psi_{(g \cdot p)}$, where $p$ is an element of the coadjoint orbit and~${\rm e}^{{\rm i} \alpha}$ is a phase factor \cite{Pe}.

\subsection[Rawnsley coherent states on CP\textasciicircum{}n]
{Rawnsley coherent states on $\boldsymbol{{\mathbb C}{\rm P}^n}$}

Let $L$ be the line bundle obtained by geometric quantization (namely the hyperplane bundle) and $L_0= L\setminus \{0\}$, $L$ with zero section removed.

We have shown that
\[
\phi_{\mu}= \frac{1}{p(\mu)}\sum \overline{f_i(\mu)}\psi_i\qquad \text{and}\qquad
\langle\phi_{\mu}, \psi\rangle = \frac{\psi(\mu)}{\tau(\mu)},
\]
where
\[
\tau(\mu)= \frac{s_0(\mu)}{|s_0(\mu)|} \chi (\mu), \qquad
\chi(\mu)= \Big(\sum |\psi_i(\mu)|^2\Big)^{\frac{1}{2}}.
\]
Let $a(\mu)= \frac{\chi(\mu)}{|s_0(\mu)|}$. Then $a(\mu)$ is a smooth function where $s_0(\mu)\neq 0$, $\mu\in M$.
Let $\tau(\mu)= s_0 (\mu) a(\mu)$,
 $e_q:=\overline{a(\mu)} \phi_{\mu}$, and $q := s_0(\mu) \in \pi^{-1}(\mu)$.
Then as in \cite{Ra}, we have $\langle e_q, \psi\rangle\cdot q= \psi(\mu)$.

Let $q^{\prime}\in \pi^{-1}(\mu)$ such that
\[
q^{\prime}= c \cdot q,\qquad
e_{q^{\prime}}= \overline{c}^{-1}\cdot e_q,\qquad
\langle e_{q^{\prime}}, \psi\rangle\cdot q^{\prime} = \langle\overline{c}^{-1} e_q, \psi\rangle\cdot cq = \overline{c}^{-1}\cdot c \langle e_q, \psi\rangle\cdot q = \psi(\mu).
\]

We give an explicit local description of Rawnsley coherent states on ${\mathbb C}{\rm P}^n$.
Let $U $ be an open subset of ${\mathbb C}{\rm P}^n$ given by $\{z_0 \neq 0\}$, where $[z_0,\dots,z_n]$ is a coordinate system on $ {\mathbb C}{\rm P}^n$. Let $(\mu_1, \mu_2,\dots, \mu_n)$ be coordinates on $U$ such that $\mu_i = z_i/z_0$, $i=1,\dots,n$.
Let $s_0$ be a holomorphic section of $H$ (the hyperplane bundle on $\mathbb{C}P^n$) such that
\[
\int_{U} \frac{|s_0(\mu)|^2}{(1+|\mu|^2)^{2s}}{\rm d}\mu_1 {\rm d}\mu_2\cdots {\rm d}\mu_n =1.
\]
Let
\[
c_p(\mu)= \frac{1}{\sum |\mu_1|^{2p_1} |\mu_2|^{2p_2}\cdots |\mu_n|^{2p_n}},
\]
where the sum runs over $p_1+p_2+\dots +p_n=p$, $p=0,1,\dots,2s$.
Let
\[
\Phi_{(p_1,p_2,\dots,p_n;p)}(\mu) = \sqrt{c_p(\mu)} \frac{\mu_1^{p_1}\cdots \mu_n^{p_n}}{(1+|\mu|^2)^s} s_0(\mu),
\]
where $p_1+ \dots +p_n = p$.
These form an orthonormal basis for sections of $H$ when restricted to~$U$.

The Rawnsley coherent states are given on $U$ by $\psi_{\mu}$ reading as follows
\[
\psi_{\mu} (\nu) := \big(1+|\mu|^2\big)^{2s} \sum_{{p_1+p_2+\dots +p_n=p;\,p=0,1,\dots,2s}} \overline{\Phi_{(p_1,p_2,\dots,p_n;p)}(\mu)} \Phi_{(p_1,p_2,\dots,p_n;p)}(\nu).
\]
Then $\psi_{\mu}(\mu)= |s_0(\mu)|^2$ and $\langle \psi_{\mu}, \psi_{\mu}\rangle= 1$.

\subsection[Rawnsley and Perelomov coherent states are equivalent for CP\textasciicircum{}n]
{Rawnsley and Perelomov coherent states are equivalent for $\boldsymbol{\mathbb{C}P^n}$}

This follows from \cite{Ra} as mentioned below.
Let $G ={\rm SU}(n+1)$, $K= {\rm S}({\rm U}(n) \times {\rm U}(1)) \subset G$ and~$\psi_0 $ be a non-zero vector in the Hilbert space of geometric quantization such that there exists a~character $\chi\colon K \rightarrow {\mathbb C}^*$ such that $U_k \psi_0 = \chi\big(k^{-1}\big)\psi_0 $. Let~$\mathcal{U}_g$ be an unitary representation of~$G$ on the Hilbert space. Then for~$g\in G$, $e(g)= \mathcal{U}_g \psi_0$ are the states of the Hilbert space which are the global Perelomov states as in \cite[pp.~403--404]{Ra} and they coincide with the Rawnsley coherent states.

\subsection*{Acknowledgement}
The authors would like to thank Gautam Bharali (IISc, Bangalore) and Mahan Mj (TIFR, Mumbai) for the useful discussions on the theory of totally real submanifolds in several complex variables and Proposition~\ref{celldecomp}. They would like to thank the anonymous referees for their valuable suggestions for improvement of the paper. Rukmini Dey acknowledges support from the project RTI4001, Department of Atomic Energy, Government of India and support from grant CRG/2018/002835, Science and Engineering Research Board, Government of India.

\pdfbookmark[1]{References}{ref}
\LastPageEnding


\begin{thebibliography}{99}
\footnotesize\itemsep=0pt

\bibitem{BS}
Berceanu S., Schlichenmaier M., Coherent state embeddings, polar divisors and
 {C}auchy formulas, \href{https://doi.org/10.1016/S0393-0440(99)00075-3}{\textit{J.~Geom. Phys.}} \textbf{34} (2000), 336--358,
 \href{https://arxiv.org/abs/math.DG/9903105}{arXiv:math.DG/9903105}.

\bibitem{Be}
Berezin F.A., Quantization, \href{https://doi.org/10.1070/IM1974v008n05ABEH002140}{\textit{Math. USSR Izv.}} \textbf{8} (1974),
 1109--1165.

\bibitem{Bo}
Boggess A., C{R} manifolds and the tangential {C}auchy--{R}iemann complex,
 \textit{Studies in Advanced Mathematics}, CRC Press, Boca Raton, FL, 1991.

\bibitem{DH}
Doyle P.H., Hocking J.G., A decomposition theorem for {$n$}-dimensional
 manifolds, \href{https://doi.org/10.2307/2034963}{\textit{Proc. Amer. Math. Soc.}} \textbf{13} (1962), 469--471.

\bibitem{Ki}
Kirwin W.D., Coherent states in geometric quantization, \href{https://doi.org/10.1016/j.geomphys.2006.04.007}{\textit{J.~Geom. Phys.}}
 \textbf{57} (2007), 531--548, \href{https://arxiv.org/abs/math.SG/0502026}{arXiv:math.SG/0502026}.

\bibitem{KS}
Klauder J.R., Skagerstam B.S. (Editors), Coherent states: applications in
 physics and mathematical physics, \href{https://doi.org/10.1142/0096}{World Scientific Publishing Co.}, Singapore,
 1985.

\bibitem{K}
Kostant B., Orbits and quantization theory, in Actes du {C}ongr\`es
 {I}nternational des {M}ath\'ematiciens ({N}ice, 1970), {T}ome~2, 1971,
 395--400.

\bibitem{Na}
Nair V.P., Quantum field theory: a modern perspective, \textit{Graduate Texts in
 Contemporary Physics}, \href{https://doi.org/10.1007/b106781}{Springer}, New York, 2005.

\bibitem{N}
Nakahara M., Geometry, topology and physics, 2nd~ed., \textit{Graduate Student Series
 in Physics}, \href{https://doi.org/10.1201/9781420056945}{Institute of Physics}, Bristol, 2003.

\bibitem{Od}
Odzijewicz A., Coherent states and geometric quantization, \href{https://doi.org/10.1007/BF02096666}{\textit{Comm. Math.
 Phys.}} \textbf{150} (1992), 385--413.

\bibitem{Pe}
Perelomov A., Generalized coherent states and their applications, \textit{Texts and
 Monographs in Physics}, \href{https://doi.org/10.1007/978-3-642-61629-7}{Springer-Verlag}, Berlin, 1986.

\bibitem{Rad}
Radcliffe J.M., Some problems of coherent spin states, \href{https://doi.org/10.1088/0305-4470/4/3/009}{\textit{J.~Phys.~A: Gen.
 Phys.}} \textbf{4} (1971), 313--323.

\bibitem{Ra}
Rawnsley J.H., Coherent states and {K}\"ahler manifolds, \href{https://doi.org/10.1093/qmath/28.4.403}{\textit{Quart.~J.
 Math. Oxford}} \textbf{28} (1977), 403--415.

\bibitem{Sch}
Schnabel R., Squeezed states of light and their applications in laser
 interferometers, \href{https://doi.org/10.1016/j.physrep.2017.04.001}{\textit{Phys. Rep.}} \textbf{684} (2017), 1--51,
 \href{https://arxiv.org/abs/1611.03986}{arXiv:1611.03986}.

\bibitem{Sp}
Spera M., On {K}\"ahlerian coherent states, in Geometry, Integrability and
 Quantization ({V}arna, 1999), Coral Press Sci. Publ., Sofia, 2000, 241--256.

\bibitem{Sp2}
Spera M., On some geometric aspects of coherent states, in Coherent states and
 their applications, \textit{Springer Proc. Phys.}, Vol.~205, \href{https://doi.org/10.1007/978-3-319-76732-1_8}{Springer}, Cham,
 2018, 157--172.

\bibitem{W}
Woodhouse N., Geometric quantization, \textit{Oxford Mathematical Monographs}, The
 Clarendon Press, Oxford University Press, New York, 1980.

\bibitem{Ya}
Yaffe L.G., Large {$N$} limits as classical mechanics, \href{https://doi.org/10.1103/RevModPhys.54.407}{\textit{Rev. Modern
 Phys.}} \textbf{54} (1982), 407--435.

\end{thebibliography}
\end{document}